\tikzstyle{dot}=[fill, circle, minimum size=1.0ex, inner sep=0pt]
\tikzstyle{ml}=[line width=0.7pt, -]
\tikzstyle{mb}=[line width=2pt, -]
\tikzstyle{cb}=[blue]
\tikzstyle{cv}=[violet]
\tikzstyle{fc}=[fill=white!75!black]
\newtheorem{thm}{Theorem}[section]
\newtheorem{lem}[thm]{Lemma}
\newtheorem{dfn}[thm]{Definition}
\newtheorem{cor}[thm]{Corollary}
\newtheorem{rem}[thm]{Remark}
\newtheorem{ex}[thm]{Example}
\def\ip{\mathfrak{i}}
\def\so{\mathfrak{s}}
\def\no{\mathfrak{n}}
\def\P{P}
\def\IP{I}
\def\SO{S}
\def\SP{M}
\def\NO{N}
\newcommand\oeis[1]{\href{https://oeis.org/#1}{#1}}
\author{Sergey \textsc{Kirgizov}}
\author{Khaydar \textsc{Nurligareev}}
\affil{LIB, Universit\'e de Bourgogne, France}
\date{}
\title{Asymptotics of self-overlapping permutations}
\begin{document}

\maketitle

\begin{abstract}
 In this work, we study the concept of self-overlapping permutations,
 which is related to the larger study of consecutive patterns in permutations.
 We show that this concept admits a simple and clear geometrical meaning, and prove that a permutation can be represented as a sequence of non-self-overlapping ones.
 The above structural decomposition allows us to obtain equations for the corresponding generating functions,
 as well as the complete asymptotic expansions for the probability that a large random permutation is (non-)self-overlapping.
 In particular, we show that almost all permutations are non-self-overlapping, and that the corresponding asymptotic expansion has the self-reference property:
 the involved coefficients count non-self-overlapping permutations once again.
 We also establish complete asymptotic expansions of the distributions of very tight non-self-overlapping patterns,
 and discuss the similarities of the non-self-overlapping permutations to other permutation building blocks, such as indecomposable and simple permutations, as well as their associated asymptotics.
\end{abstract}

{\bf Keywords\/}: {asymptotics, permutations, consecutive patterns, overlapping.}

\section{Introduction}

The study of \emph{pattern-avoiding permutations} is a dynamically developing area of mathematics that has been exciting scientists since the mid-1980s.
The central concept of this study is containment:
a~permutation $\sigma = \sigma_1 \sigma_2 \ldots \sigma_n$ \emph{contains} another permutation $\pi = \pi_1 \pi_2 \ldots \pi_k$ as a \emph{pattern}
if there exists a subsequence $1 \leqslant i_1 < i_2 < \ldots < i_k \leqslant n$ such that for all indices $j$ and $l$ the inequalities $\pi_j < \pi_l$ and $\sigma_{i_j} < \sigma_{i_l}$ are equivalent.
If $\sigma$ does not contain $\pi$, then it is said that $\sigma$ \emph{avoids} $\pi$.
Since the first systematic enumeration by Simion and Schmidt~\cite{SimionSchmidt1985}, the interest in permutation patterns has grown steadily, which can be explained by the presence of extensive connections with other mathematical fields.
For instance, the theorem of Marcus and Tardos \cite{MarcusTardos2004} stating that the growth rate of every permutation class avoiding a given pattern is singly exponential relates to words and 0--1 matrices as well.

One of the important variations of the above study concerns the notion of \emph{consecutive patterns}, first systematically treated by Elizalde and Noy~\cite{ElizaldeNoy2003}.
In terms of B\'ona~\cite{Bona2007}, we say that
a permutation $\sigma = \sigma_1 \sigma_2 \ldots \sigma_n$ \emph{tightly contains} a permutation $\pi = \pi_1 \pi_2 \ldots \pi_k$
if there exists an integer $0 \leqslant i \leqslant n-k$ such that for all indices $j$ and $l$ the inequalities $\pi_j < \pi_l$ and $\sigma_{i+j} < \sigma_{i+l}$ are equivalent.
A good overview of the state of this topic for 2016 was provided by Elizalde~\cite{Elizalde2016}.
To obtain generating functions and distributions of tight pattern occurrences in various combinatorial objects,
one can employ the \emph{cluster method} of Goulden and Jackson~\cite{GouldenJackson1979, GouldenJackson2004, NoonanZeilberger1999}.
As a~particular example, we would like to cite the 
proofs of Elizalde--Noy and Nakamura conjectures~\cite{Elizalde2013}.

Consecutive patterns can be thought as a restricted version of permutation patterns, where the entries of $\sigma$ that form a pattern must be in consecutive positions.
It is natural to take one step further by forcing these entries to be consecutive themselves.
Following B\'ona~\cite{Bona2007}, we say that
a permutation $\sigma = \sigma_1 \sigma_2 \ldots \sigma_n$ \emph{very tightly contains} another permutation $\pi = \pi_1 \pi_2 \ldots \pi_k$
if there exist two integers $0 \leqslant i,h \leqslant n-k$ such that $\sigma_{i+j} = \pi_j + h$ for all $1 \leqslant j \leqslant k$.
For the first time, these and more general patterns were considered by Myers~\cite{Myers2002} who called the corresponding type of containment \emph{rigid}.
Myers established distributions of non-self-overlapping patterns (see Definition~\ref{def:self-overlapping}). With the help of the cluster method, her result was completed by Claesson~\cite{Claesson2022} who also provided joint distributions for any patterns of the same size.
Note that Claesson called the patterns under consideration \emph{Hertzsprung}.
In the following, we will use the term \emph{very tight pattern} to refer to them.

Understanding the overlaps of combinatorial sub-structures is of core importance in the cluster method.
In the case of consecutive patterns, this need is reflected in the concept of non-overlapping permutations~\cite{Bona2011}, also known as minimally overlapping~\cite{DuaneRemmel2011,PanRemmel2016}:
a permutation of length $k$ is \emph{non-overlapping} if it is not contained at the same time as a prefix and suffix in any permutation of size $n$ satisfying $k < n < 2k-1$.
In other words, if the intersecting prefix and suffix of a permutation are isomorphic, then they represent an example of overlapping permutation.
Here, we call two sequences of distinct integers $\pi = \pi_1 \pi_2 \ldots \pi_k$ and $\tau = \tau_1 
\tau_2 \ldots \tau_k$ \emph{isomorphic}
if $\pi_i > \pi_j \Leftrightarrow \tau_i > \tau_j$ for all possible indices $i$ and~$j$.
For instance, the permutation~$132$ is non-overlapping, but
$1324$ is overlapping, since the prefix and suffix of size four of the permutation $132546$ form a~$1324$-pattern.
The counting sequence of non-overlapping permutations starts with
 \[
  1,\,
  2,\,
  4,\,
  12,\,
  48,\,
  280,\,
  1\,864,\,
  14\,840,\,
  132\,276,\,
  1\,323\,504,\,
  \ldots,
 \]
see \oeis{A263867} entry in Sloane's Encyclopedia~\cite{oeis}.

Applying the notion of very tight containment to overlapping permutations, we come to the main object of our interest.
This is about the concept of \emph{self-overlapping permutations} (called \emph{extendible} by Myers~\cite{Myers2002} and B\'ona~\cite{Bona2007}),
which is given by the following definition.

\begin{dfn}\label{def:self-overlapping}
 A permutation $\sigma \in S_n$ is \emph{self-overlapping}, if there is an integer $1 \leqslant k<n$ called \emph{overlapping range} such that the following three conditions hold:
 \begin{enumerate}
  \item
   the interval $\{1,\ldots,k\}$ is invariant under $\sigma$,
  \item
   the interval $\{n-k+1,\ldots,n\}$ is invariant under $\sigma$,
  \item
   the first and last $k$ consecutive positions of $\sigma$ form isomorphic patterns.
 \end{enumerate}
 Otherwise, we call $\sigma$ \emph{non-self-overlapping}.
 We designate by $\so_n$ and $\no_n$, respectively, the numbers of self-overlapping and non-self-overlapping  permutations of size $n$.
\end{dfn}

Here, we use the definition given by B\'ona~\cite{Bona2007}.
It has a particular property: if $\sigma = \sigma_1 \sigma_2 \ldots \sigma_n$ is self-overlapping, then $\sigma_1 < \sigma_n$.
Sometimes this property is too restrictive, since the reverse $\overline{\sigma} = \sigma_n \sigma_{n-1} \ldots \sigma_1$ behaves similarly to $\sigma$ in many senses.
This is why both Myers~\cite{Myers2002} and Claesson~\cite{Claesson2022} understood $\overline{\sigma}$ as a self-overlapping permutation as well.

From a geometric point of view, if a permutation $\sigma \in S_n$ is self-overlapping with an overlapping range $k$, then its plot possesses two congruent blocks of size $k \times k$:
one of them is located at the lower left corner, while the other is in the upper right one.
For example, the permutation $\sigma = 214365$ is self-overlapping with overlapping ranges $k=2$ and $k=4$ (Figure~\ref{fig:example_214365}).

\begin{figure}[ht!]
\centering
\begin{tikzpicture}
 \begin{scope}[xshift = -5cm, scale=0.5]
  \draw[ml] (0,0) grid (6,6);
  \begin{scope}
   \node[dot] (a) at (1-0.5,2-0.5) {};
   \node[dot] (b) at (2-0.5,1-0.5) {};
   \node[dot] (c) at (3-0.5,4-0.5) {};
   \node[dot] (d) at (4-0.5,3-0.5) {};
   \node[dot] (e) at (5-0.5,6-0.5) {};
   \node[dot] (f) at (6-0.5,5-0.5) {};
   \draw (1-0.5,-0.5) node {$2$};
   \draw (2-0.5,-0.5) node {$1$};
   \draw (3-0.5,-0.5) node {$4$};
   \draw (4-0.5,-0.5) node {$3$};
   \draw (5-0.5,-0.5) node {$6$};
   \draw (6-0.5,-0.5) node {$5$};
  \end{scope}
  \draw[mb,cb] (4,6) rectangle (6,4);
  \draw[mb,cv] (2,0) rectangle (0,2);
 \end{scope}
 \begin{scope}[scale=0.5]
  \draw[ml] (0,0) grid (6,6);
  \begin{scope}
   \node[dot] (a) at (1-0.5,2-0.5) {};
   \node[dot] (b) at (2-0.5,1-0.5) {};
   \node[dot] (c) at (3-0.5,4-0.5) {};
   \node[dot] (d) at (4-0.5,3-0.5) {};
   \node[dot] (e) at (5-0.5,6-0.5) {};
   \node[dot] (f) at (6-0.5,5-0.5) {};
   \draw (1-0.5,-0.5) node {$2$};
   \draw (2-0.5,-0.5) node {$1$};
   \draw (3-0.5,-0.5) node {$4$};
   \draw (4-0.5,-0.5) node {$3$};
   \draw (5-0.5,-0.5) node {$6$};
   \draw (6-0.5,-0.5) node {$5$};
  \end{scope}
  \draw[mb,cb] (2,6) rectangle (6,2);
  \draw[mb,cv] (4,0) rectangle (0,4);
 \end{scope}
\end{tikzpicture}
\caption{Overlapping blocks in the permutation $214365$.}
\label{fig:example_214365}
\end{figure}
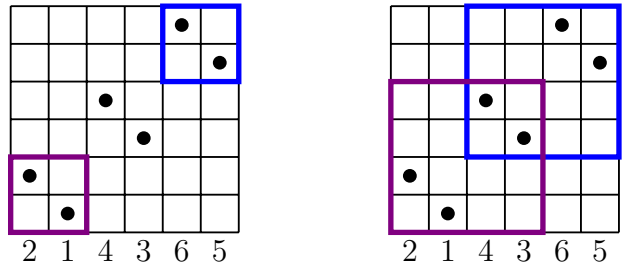

Figure~\ref{fig:small_self-overlapping_permutations} illustrates all self-overlapping permutation of size $n \leqslant 4$.
Observing these and other examples, the reader may give a guess that a self-overlapping permutation possesses an overlapping range of no more than half its size.
In Section~\ref{sec:structure}, we show that this is, indeed, the case (Lemma~\ref{lem:overlapping_range}).
This observation gives rise to a structural decomposition of permutations into direct sums of non-self-overlapping ones (Theorem~\ref{thm:full_decomposition}).
In its turn, the decomposition leads to relations on generating functions (Theorem~\ref{thm:ogf_for_NO_and_SO}), that allows us to establish counting sequences $(\so_n)$ and $(\no_n)$. 

\begin{figure}[ht!]
\centering
\begin{tikzpicture}
 \begin{scope}[xshift = -5.5cm, yshift = 0.5cm, scale=0.5]
  \draw[ml] (0,0) grid (2,2);
  \begin{scope}
   \node[dot] (a) at (1-0.5,1-0.5) {};
   \node[dot] (b) at (2-0.5,2-0.5) {};
   \draw (1-0.5,-0.5) node {$1$};
   \draw (2-0.5,-0.5) node {$2$};
  \end{scope}
 \end{scope}
 \begin{scope}[xshift = -3cm, yshift = 0.25cm, scale=0.5]
  \draw[ml] (0,0) grid (3,3);
  \begin{scope}
   \node[dot] (a) at (1-0.5,1-0.5) {};
   \node[dot] (b) at (2-0.5,2-0.5) {};
   \node[dot] (c) at (3-0.5,3-0.5) {};
   \draw (1-0.5,-0.5) node {$1$};
   \draw (2-0.5,-0.5) node {$2$};
   \draw (3-0.5,-0.5) node {$3$};
  \end{scope}
 \end{scope}
 \begin{scope}[xshift = 0cm, scale=0.5]
  \draw[ml] (0,0) grid (4,4);
  \begin{scope}
   \node[dot] (a) at (1-0.5,1-0.5) {};
   \node[dot] (b) at (2-0.5,2-0.5) {};
   \node[dot] (c) at (3-0.5,3-0.5) {};
   \node[dot] (d) at (4-0.5,4-0.5) {};
   \draw (1-0.5,-0.5) node {$1$};
   \draw (2-0.5,-0.5) node {$2$};
   \draw (3-0.5,-0.5) node {$3$};
   \draw (4-0.5,-0.5) node {$4$};
  \end{scope}
 \end{scope}
 \begin{scope}[xshift = 3.5cm, scale=0.5]
  \draw[ml] (0,0) grid (4,4);
  \begin{scope}
   \node[dot] (a) at (1-0.5,1-0.5) {};
   \node[dot] (b) at (2-0.5,3-0.5) {};
   \node[dot] (c) at (3-0.5,2-0.5) {};
   \node[dot] (d) at (4-0.5,4-0.5) {};
   \draw (1-0.5,-0.5) node {$1$};
   \draw (2-0.5,-0.5) node {$3$};
   \draw (3-0.5,-0.5) node {$2$};
   \draw (4-0.5,-0.5) node {$4$};
  \end{scope}
 \end{scope}
 \begin{scope}[xshift = 7cm, scale=0.5]
  \draw[ml] (0,0) grid (4,4);
  \begin{scope}
   \node[dot] (a) at (1-0.5,2-0.5) {};
   \node[dot] (b) at (2-0.5,1-0.5) {};
   \node[dot] (c) at (3-0.5,4-0.5) {};
   \node[dot] (d) at (4-0.5,3-0.5) {};
   \draw (1-0.5,-0.5) node {$2$};
   \draw (2-0.5,-0.5) node {$1$};
   \draw (3-0.5,-0.5) node {$4$};
   \draw (4-0.5,-0.5) node {$3$};
  \end{scope}
 \end{scope}
\end{tikzpicture}
\caption{Self-overlapping permutations of size at most $4$.}
\label{fig:small_self-overlapping_permutations}
\end{figure}
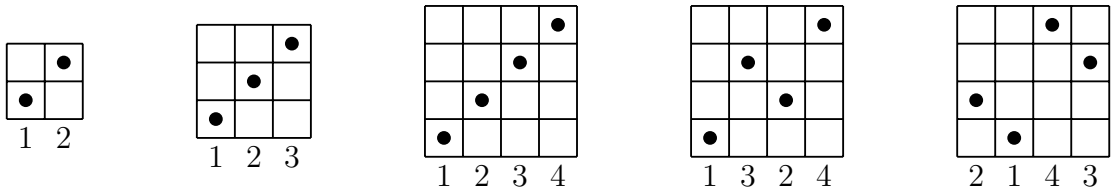

 The first few values of the sequence $(\so_n)$ are
 \[
  \so_n
   =
  0,\,
  1,\,
  1,\,
  3,\,
  7,\,
  31,\,
  131,\,
  775,\,
  5\,211,\,
  41\,315,\,
  \ldots
 \]
 while the complementary sequence $(\no_n)$ begins with
 \[
  \no_n
   =
  1,\,
  1,\,
  5,\,
  21,\,
  113,\,
  689,\,
  4\,909,\,
  39\,545,\,
  357\,669,\,
  3\,587\,485,\,
  \ldots
 \]
A single glance at these numbers is enough to conjecture that with high probability a typical large permutation is non-self-overlapping.
We prove this conjecture at Section~\ref{sec:asymptotics}.
Moreover, we establish a complete asymptotic expansion of the probability that a uniform random permutation is self-overlapping (Theorem~\ref{thm:asymp_so}).
It turns out that the coefficients involved into the expansion admit combinatorial interpretation: they are $\no_n$ again.
Remarkably, this result does not need the advanced techniques typically used to establish combinatorial meaning of asymptotic expansions~\cite{MonteilNurligareev2024,Nurligareev2022}.
The proof follows directly from the above mentioned structural decomposition, supplemented by estimates on the asymptotic behavior of the tails.

In Section~\ref{sec:pattern_asymptotics}, we discuss very tight patterns.
As we have already mentioned, for the non-self-overlapping case, the distributions of such patterns were established by Myers~\cite{Myers2002}.
Here, based on her results, we present complete asymptotic expansions of these distributions.
For the self-overlapping case, the distributions were given by Claesson~\cite{Claesson2022}.
Evaluating asymptotics requires advanced methods in this case, such as Borinsky's approach~\cite{Borinsky2018},
which is beyond the scope of this work.
We will provide the full result in our next paper~\cite{KN-clusters}.

The established asymptotics raise several questions that we discuss in Section~\ref{sec:conclusion}.

\section{Structure and enumeration}\label{sec:structure}

\begin{lem}\label{lem:overlapping_range}
 Any self-overlapping permutation of size $n$ admits an overlapping range of size at most $n/2$.
\end{lem}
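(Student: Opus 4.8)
The plan is to collapse the three defining conditions into a single arithmetic ``diagonal shift'' identity, after which the statement follows from an elementary closure property. Throughout write $m = n-k$ for the complementary quantity. I first claim that $k$ is an overlapping range of $\sigma$ if and only if
\[
 \sigma_{i+m} = \sigma_i + m \qquad \text{for all } 1 \le i \le n-m.
\]
Geometrically this says precisely that the plot of $\sigma$ is invariant under the diagonal translation $(x,y)\mapsto(x+m,y+m)$ wherever both endpoints lie in the grid, which is the natural way to encode that the lower-left and upper-right $k\times k$ blocks coincide.

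To prove the equivalence I would argue both directions. Assume first that $k$ is an overlapping range. Condition~1 says the first $k$ entries are a permutation of $\{1,\dots,k\}$, so the pattern they form is the sequence $(\sigma_1,\dots,\sigma_k)$ itself; condition~2 says the last $k$ entries are a permutation of $\{m+1,\dots,n\}$, so their pattern is the reduced sequence $(\sigma_{m+1}-m,\dots,\sigma_n-m)$. Since both are orderings of the very same set $\{1,\dots,k\}$, being order-isomorphic (condition~3) forces them to agree term by term, which is exactly the shift relation. Conversely, the shift relation gives $\sigma_i = \sigma_{i+m}-m \le n-m = k$ for every $i\le k$, so the first $k$ entries are exactly $\{1,\dots,k\}$ (condition~1); applying the shift sends these onto $\{m+1,\dots,n\}$ in the last $k$ positions (condition~2); and it makes the two reduced patterns literally equal (condition~3). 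The one point needing care is the elementary observation that two order-isomorphic sequences built from the same finite set of distinct values must coincide.

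Call $m\in\{1,\dots,n-1\}$ a \emph{co-range} when the shift relation above holds. The key closure property is that co-ranges double: if $m$ is a co-range and $2m\le n-1$, then for $1\le i\le n-2m$ one has $\sigma_{i+2m}=\sigma_{(i+m)+m}=\sigma_{i+m}+m=\sigma_i+2m$, so $2m$ is a co-range as well. With this in hand the lemma is immediate. Since $\sigma$ is self-overlapping it has at least one co-range; let $m^{\ast}$ be the largest one. If $m^{\ast}<n/2$, then $2m^{\ast}\le n-1$ would be a strictly larger co-range, contradicting maximality. Hence $m^{\ast}\ge n/2$, and the corresponding range $k=n-m^{\ast}\le n/2$ (with $1\le k<n$, since $1\le m^{\ast}\le n-1$) is the desired overlapping range.

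I expect the main obstacle to be the first direction of the equivalence, namely extracting the clean translation identity from the geometric ``isomorphic blocks'' phrasing of condition~3; once the co-range formulation is established, the doubling step and the conclusion are short.
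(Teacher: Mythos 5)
Your proof is correct. At its core it uses the same combinatorial fact as the paper: an overlapping range $k > n/2$ forces a smaller one, namely $2k-n$ --- in your co-range language, $m = n-k$ doubles to $2m$, and $n-2m = 2k-n$, so your doubling step and the paper's shrinking step are the same operation. What differs is the packaging. The paper stays geometric: it observes that the two congruent $k\times k$ corner blocks intersect in a $(2k-n)\times(2k-n)$ block congruent to the corners of each, and then iterates the shrinking step until the range drops below $n/2$. You instead arithmetize the definition first, proving that $k$ is an overlapping range if and only if $\sigma_{i+m} = \sigma_i + m$ for all $1 \le i \le n-m$ (the key point being that two order-isomorphic sequences over the same value set must coincide), after which the doubling step is a one-line computation and termination is handled by taking the maximal co-range rather than by iteration. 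Your route buys rigor: the translation identity makes precise what the paper's ``congruent blocks'' and ``sliding'' pictures only suggest, and the extremal argument sidesteps any appeal to ``repeat until the range is small enough''. The paper's route buys brevity and visual intuition, at the cost of leaving the congruence bookkeeping informal; your shift identity is also a reusable tool (it is exactly the statement that the plot is invariant under the diagonal translation by $m$), which the geometric phrasing keeps implicit.
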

\begin{proof}
  The idea of the proof can be illustrated by Figure~\ref{fig:example_214365}.
  As seen from the right-hand side of the figure, the permutation 214365 admits the overlapping range of size $4$,
  the corresponding overlapping blocks are 2143 and 4365.
  The intersection of these blocks is the block 43 of size $2$,
  which is isomorphic to the lower left corner of 2143, as well as to the upper right corner of 4365.
  Hence, we have two isomorphic blocks, 21 and 65, and the initial permutation admits the overlapping range of size $2$. 

 In the general case, suppose that an overlapping range $k$ of a self-overlapping permutation $\sigma \in S_n$ is greater than~$n/2$.
 In this case, the lower left and upper right blocks of size $k \times k$ are congruent and have non-empty intersection.
 The intersection is a $(2k-n)\times(2k-n)$ block, which is congruent to the lower left and upper right blocks of the same size.
 Therefore, $\sigma$ admits overlapping range $(2k-n)<k$.
 If $(2k-n)<n/2$, we are done.
 Otherwise, repeat the procedure until the overlapping block sizes are less than $n/2$ (Figure~\ref{fig:small_overlapping_range_exists}).
\end{proof}

\begin{figure}[ht!]
\centering
\begin{tikzpicture}
 \begin{scope}[xshift = -5cm, scale=0.5]
  \draw[ml] (0,0) rectangle (7,7);
  \draw[mb,fc] (1,6) rectangle (6,1);
  \filldraw[violet!25!white] (5,0) rectangle (0,5);
  \filldraw[blue!25!white] (2,7) rectangle (7,2);
  \filldraw[violet!25!black!40!white] (5,1) rectangle (1,5);
  \filldraw[blue!25!black!40!white] (6,2) rectangle (2,6);
  \filldraw[black!55!white] (5,2) rectangle (2,5);
  \draw[mb,cb] (1,7) rectangle (7,1);
  \draw[mb,cv] (6,0) rectangle (0,6);
  \draw[dashed,mb,cb] (2,7) rectangle (7,2);
  \draw[dashed,mb,cv] (5,0) rectangle (0,5);
 \end{scope}
 \begin{scope}[scale=0.5]
  \draw[ml] (0,0) rectangle (7,7);
  \draw[mb,fc] (2,5) rectangle (5,2);
  \filldraw[violet!25!white] (3,0) rectangle (0,3);
  \filldraw[blue!25!white] (4,7) rectangle (7,4);
  \filldraw[violet!25!black!40!white] (3,2) rectangle (2,3);
  \filldraw[blue!25!black!40!white] (5,4) rectangle (4,5);
  \draw[mb,cb] (2,7) rectangle (7,2);
  \draw[mb,cv] (5,0) rectangle (0,5);
  \draw[dashed,mb,cb] (4,7) rectangle (7,4);
  \draw[dashed,mb,cv] (3,0) rectangle (0,3);
 \end{scope}
 \begin{scope}[xshift = 5cm, scale=0.5]
  \draw[ml] (0,0) rectangle (7,7);
  \draw[mb] (4,3) rectangle (3,4);
  \draw[mb,cb] (4,7) rectangle (7,4);
  \draw[mb,cv] (3,0) rectangle (0,3);
 \end{scope}
 \draw (-0.75,1.75) node {$\rightsquigarrow$};
 \draw (4.25,1.75) node {$\rightsquigarrow$};
\end{tikzpicture}
\caption{Schema for reducing an overlapping range. If overlapping blocks have a~non-empty intersection (gray areas on the figure), then this intersection can be slided to get smaller overlapping blocks (blue and violet areas).}
\label{fig:small_overlapping_range_exists}
\end{figure}
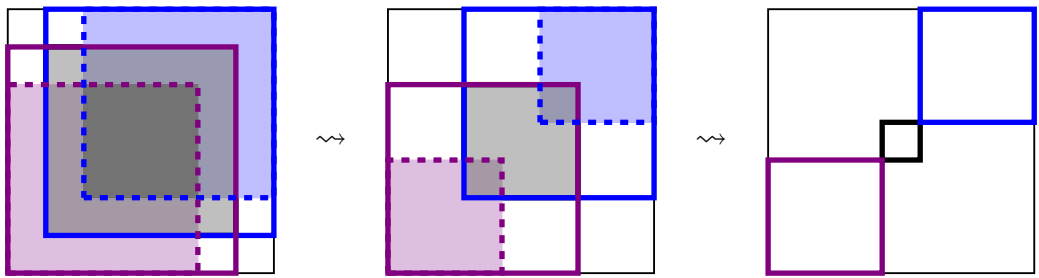

Recall that the \emph{direct sum} of permutations $\pi \in S_m$ and $\tau \in S_n$ is the permutation $\pi\oplus\tau$ of length $(m+n)$ defined by
 \[
  (\pi\oplus\tau)(i) =
  \left\{\begin{array}{ll}
   \pi(i) & \mbox{for } 1\leqslant i \leqslant m, \\
   \tau(i-m)+m & \mbox{for } m+1\leqslant i \leqslant n.
  \end{array}\right.
 \]
Lemma~\ref{lem:overlapping_range} allows us to decompose self-overlapping permutations into a direct sum of smaller permutations
(schematically, the summands are represented by consecutive non-intersecting blocks, see the right part of Figure~\ref{fig:small_overlapping_range_exists};
decompositions of several permutations of size $6$ are shown in Figure~\ref{fig:several_decompositions}).
More precisely, we get the following structural result.

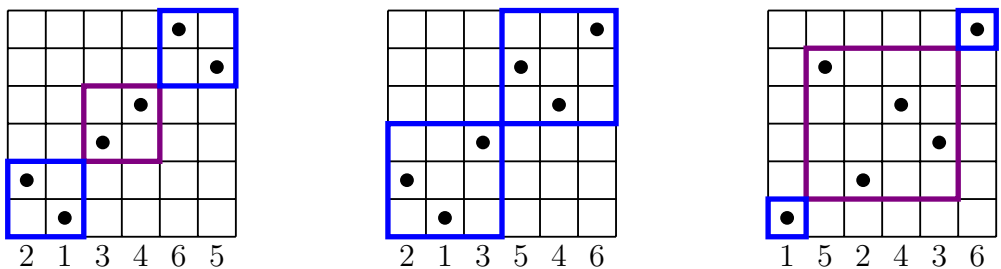
\begin{figure}[ht!]
\centering
\begin{tikzpicture}
 \begin{scope}[xshift = -5cm, scale=0.5]
  \draw[ml] (0,0) grid (6,6);
  \begin{scope}
   \node[dot] (a) at (1-0.5,2-0.5) {};
   \node[dot] (b) at (2-0.5,1-0.5) {};
   \node[dot] (c) at (3-0.5,3-0.5) {};
   \node[dot] (d) at (4-0.5,4-0.5) {};
   \node[dot] (e) at (5-0.5,6-0.5) {};
   \node[dot] (f) at (6-0.5,5-0.5) {};
   \draw (1-0.5,-0.5) node {$2$};
   \draw (2-0.5,-0.5) node {$1$};
   \draw (3-0.5,-0.5) node {$3$};
   \draw (4-0.5,-0.5) node {$4$};
   \draw (5-0.5,-0.5) node {$6$};
   \draw (6-0.5,-0.5) node {$5$};
  \end{scope}
  \draw[mb,cv] (2,2) rectangle ++(2,2);
  \draw[mb,cb] (0,0) rectangle ++(2,2);
  \draw[mb,cb] (4,4) rectangle ++(2,2);
 \end{scope}
 \begin{scope}[scale=0.5]
  \draw[ml] (0,0) grid (6,6);
  \begin{scope}
   \node[dot] (a) at (1-0.5,2-0.5) {};
   \node[dot] (b) at (2-0.5,1-0.5) {};
   \node[dot] (c) at (3-0.5,3-0.5) {};
   \node[dot] (d) at (4-0.5,5-0.5) {};
   \node[dot] (e) at (5-0.5,4-0.5) {};
   \node[dot] (f) at (6-0.5,6-0.5) {};
   \draw (1-0.5,-0.5) node {$2$};
   \draw (2-0.5,-0.5) node {$1$};
   \draw (3-0.5,-0.5) node {$3$};
   \draw (4-0.5,-0.5) node {$5$};
   \draw (5-0.5,-0.5) node {$4$};
   \draw (6-0.5,-0.5) node {$6$};
  \end{scope}
  \draw[mb,cb] (0,0) rectangle ++(3,3);
  \draw[mb,cb] (3,3) rectangle ++(3,3);
 \end{scope}
 \begin{scope}[xshift = 5cm, scale=0.5]
  \draw[ml] (0,0) grid (6,6);
  \begin{scope}
   \node[dot] (a) at (1-0.5,1-0.5) {};
   \node[dot] (b) at (2-0.5,5-0.5) {};
   \node[dot] (c) at (3-0.5,2-0.5) {};
   \node[dot] (d) at (4-0.5,4-0.5) {};
   \node[dot] (e) at (5-0.5,3-0.5) {};
   \node[dot] (f) at (6-0.5,6-0.5) {};
   \draw (1-0.5,-0.5) node {$1$};
   \draw (2-0.5,-0.5) node {$5$};
   \draw (3-0.5,-0.5) node {$2$};
   \draw (4-0.5,-0.5) node {$4$};
   \draw (5-0.5,-0.5) node {$3$};
   \draw (6-0.5,-0.5) node {$6$};
  \end{scope}
  \draw[mb,cv] (1,1) rectangle ++(4,4);
  \draw[mb,cb] (0,0) rectangle ++(1,1);
  \draw[mb,cb] (5,5) rectangle ++(1,1);
 \end{scope}
\end{tikzpicture}
\caption{Decompositions of several self-overlapping permutations.}
\label{fig:several_decompositions}
\end{figure}

\begin{lem}\label{lem:SO-decomposition}
 Any self-overlapping permutation $\sigma$ can be uniquely decomposed into a direct sum
 \[
  \sigma
   =
  \pi \oplus \tau \oplus \pi,
 \]
 where $\pi$ is non-self-overlapping permutation and $\tau$ is arbitrary (possibly, empty) permutation.
\end{lem}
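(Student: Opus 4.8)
The plan is to pin down the block sizes by a numerical invariant of $\sigma$ alone---its smallest overlapping range---and then to read $\pi$ and $\tau$ off the corresponding blocks; existence and uniqueness will both follow once I control how overlapping ranges behave under the direct-sum structure. Throughout I write $k := |\pi|$ and $n := |\sigma|$, and I record the two value-sets dictated by conditions (1) and (2) of Definition~\ref{def:self-overlapping}: if $k$ is an overlapping range of $\sigma$, then the first $k$ entries of $\sigma$ occupy exactly $\{1,\ldots,k\}$ and the last $k$ entries occupy exactly $\{n-k+1,\ldots,n\}$.

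The heart of the argument is the following correspondence. Suppose $\sigma = \pi \oplus \tau \oplus \pi$ with $|\pi| = k$, and let $1 \leqslant j < k$. Then $j$ is an overlapping range of $\sigma$ if and only if $j$ is an overlapping range of $\pi$. To prove the \emph{if} direction I would transport the overlap of $\pi$ upward: the first $j$ entries of $\sigma$ agree with the first $j$ entries of the lower copy of $\pi$ and hence occupy $\{1,\ldots,j\}$, while the last $j$ entries of $\sigma$ lie inside the upper copy of $\pi$ and, because that copy sits on the top $k$ values of $\sigma$, occupy $\{n-j+1,\ldots,n\}$; the isomorphism of the first and last $j$ entries of $\pi$ is then inherited verbatim by $\sigma$. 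The \emph{only if} direction is the same bookkeeping run backwards: an overlap $j<k$ of $\sigma$ forces its first $j$ entries into $\{1,\ldots,j\}$ and its last $j$ entries into $\{n-j+1,\ldots,n\}$, both of which fall strictly inside a single copy of $\pi$, so the invariance and the isomorphism descend to $\pi$.

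Granting the correspondence, existence and uniqueness are immediate. For existence, let $k$ be the smallest overlapping range of $\sigma$; by Lemma~\ref{lem:overlapping_range} we have $k \leqslant n/2$, so the two outer blocks are disjoint and the middle positions $\{k+1,\ldots,n-k\}$ carry exactly the leftover values $\{k+1,\ldots,n-k\}$. Standardising the three blocks and using condition (3) yields $\sigma = \pi \oplus \tau \oplus \pi$ with $|\pi| = k$, and if $\pi$ admitted an overlapping range $j < k$, the correspondence would make $j$ an overlapping range of $\sigma$, contradicting minimality; hence this $\pi$ is non-self-overlapping. For uniqueness, observe that in any decomposition of the required form $k = |\pi|$ is itself an overlapping range of $\sigma$ (the block structure makes $\{1,\ldots,k\}$ and $\{n-k+1,\ldots,n\}$ invariant and the two outer copies of $\pi$ isomorphic), so $k \geqslant k_0$, where $k_0$ denotes the smallest overlapping range. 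Were $k > k_0$, the correspondence applied with $j = k_0$ would exhibit $k_0$ as an overlapping range of $\pi$, contradicting that $\pi$ is non-self-overlapping. Thus $k = k_0$ is forced, which determines $\pi$ and $\tau$ uniquely as the standardisations of the first and middle blocks.

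The main obstacle is precisely the bookkeeping inside the correspondence: one must keep track not merely of the relative order of the entries but of the exact value-sets that each sub-block occupies, so that the invariance conditions (1) and (2) genuinely survive the passage between $\sigma$ and its outer block $\pi$. The strict inequality $j < k$ is what guarantees that the first (respectively, last) $j$ entries of $\sigma$ stay within a single copy of $\pi$, and it is this containment that makes both transport steps valid.
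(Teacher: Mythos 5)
Your proof is correct and takes essentially the same approach as the paper: fix the minimal overlapping range $k$, invoke Lemma~\ref{lem:overlapping_range} to get $k \leqslant n/2$, read $\pi$ and $\tau$ off the three invariant intervals, and transfer overlapping ranges between $\sigma$ and its outer block $\pi$ to exploit minimality. If anything, you are more thorough: the paper proves existence and the non-self-overlapping property of $\pi$ by the forward transfer but treats uniqueness only implicitly (essentially via Figure~\ref{fig:overlapping_range_minimality}), whereas you spell out the reverse direction of the correspondence that uniqueness genuinely requires.
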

\begin{proof}
 Let $k$ be the minimal overlapping range of $\sigma$.
 According to Lemma~\ref{lem:overlapping_range}, $k \leqslant n/2$.
 Hence, $\sigma$~possesses three consecutive invariant intervals: $\{1,\ldots,k\}$, $\{k+1,\ldots,n-k\}$ and $\{n-k+1,\ldots,n\}$ (it may happen that the second of them is empty).
 This gives us the above decomposition $\sigma = \pi \oplus \tau \oplus \pi$.
 Note that the permutation $\pi \in S_k$ is non-self-overlapping due to the minimality of the overlapping range $k$.
 Indeed, if $\pi$ is self-overlapping with an overlapping range $l$, then $l<k$ is the overlapping range of $\sigma$ too, which leads to a contradiction (see Figure~\ref{fig:overlapping_range_minimality}).
\end{proof}

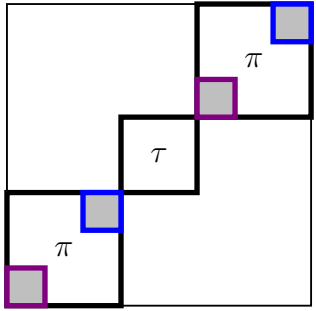
\begin{figure}[ht!]
\centering
\begin{tikzpicture}
 \begin{scope}[scale=0.5]
  \draw[ml] (0,0) rectangle (8,8);
  \draw[mb] (5,3) rectangle (3,5);
  \draw[mb] (5,8) rectangle (8,5);
  \draw[mb] (3,0) rectangle (0,3);
  \draw[mb,cv,fc] (1,0) rectangle (0,1);
  \draw[mb,cv,fc] (6,5) rectangle (5,6);
  \draw[mb,cb,fc] (3,2) rectangle (2,3);
  \draw[mb,cb,fc] (8,7) rectangle (7,8);
  \draw (2-0.5,2-0.5) node {$\pi$};
  \draw (4,4) node {$\tau$};
  \draw (7-0.5,7-0.5) node {$\pi$};
 \end{scope}
\end{tikzpicture}
\caption{Structure of self-overlapping permutations. If the overlapping block $\pi$ is a self-overlapping permutation itself, then its size is not the minimal overlapping range of the permutation $\sigma = \pi \oplus \tau \oplus \pi$.}
\label{fig:overlapping_range_minimality}
\end{figure}

\begin{cor}\label{cor:SO_finite}
 The counting sequence $(\so_n)$ of self-overlapping permutations satisfies 
 \[
  \so_n
   =
  \sum\limits_{k=1}^{\lfloor n/2\rfloor} \no_k \cdot (n-2k)!.
 \]
\end{cor}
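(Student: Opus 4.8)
The plan is to read the claimed identity as a counting statement and obtain it directly from the unique decomposition furnished by Lemma~\ref{lem:SO-decomposition}. That lemma tells us that each self-overlapping permutation $\sigma \in S_n$ corresponds to a unique triple $\sigma = \pi \oplus \tau \oplus \pi$ with $\pi$ non-self-overlapping and $\tau$ arbitrary. I would therefore enumerate self-overlapping permutations by conditioning on the size $k := |\pi|$ of the repeated non-self-overlapping summand. For a fixed $k$, the middle permutation $\tau$ must have size $|\tau| = n - 2k$, so the total size constraint $n = 2k + |\tau|$ with $|\tau| \geqslant 0$ immediately forces $1 \leqslant k \leqslant \lfloor n/2 \rfloor$, which is exactly the range of summation.

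Next I would count the contributions for each fixed $k$. The number of admissible choices for $\pi$ is, by definition, the number $\no_k$ of non-self-overlapping permutations of size $k$. Once $\pi$ is chosen, the summand $\tau$ ranges freely over all permutations of size $n - 2k$, contributing a factor $(n-2k)!$ (with the convention $0! = 1$ covering the case of an empty~$\tau$). Since $\pi$ and $\tau$ determine $\sigma = \pi \oplus \tau \oplus \pi$ completely, each fixed $k$ accounts for $\no_k \cdot (n-2k)!$ self-overlapping permutations, and summing over the valid range of $k$ yields the stated formula.

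The one point that deserves care, and which I regard as the crux, is establishing that this correspondence is genuinely a bijection. Lemma~\ref{lem:SO-decomposition} supplies the forward direction together with uniqueness, which guarantees that distinct self-overlapping permutations produce distinct triples $(k, \pi, \tau)$; this makes the assignment injective. For surjectivity I would verify the converse, namely that every direct sum $\pi \oplus \tau \oplus \pi$ with $\pi$ non-self-overlapping of size $k \geqslant 1$ is indeed self-overlapping with overlapping range $k$. This amounts to checking the three conditions of Definition~\ref{def:self-overlapping}: the first block $\pi$ keeps $\{1,\ldots,k\}$ invariant, the last block $\pi$ keeps $\{n-k+1,\ldots,n\}$ invariant, and the first and last $k$ positions both realize the pattern $\pi$, hence are isomorphic; the requirement $1 \leqslant k < n$ holds since $n = 2k + |\tau| \geqslant 2k > k$. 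With both directions in place, the summation is a bijective count, and the corollary follows.
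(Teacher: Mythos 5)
Your proof is correct and follows exactly the route the paper intends: the corollary is stated without proof as an immediate consequence of Lemma~\ref{lem:SO-decomposition}, namely counting triples $(k,\pi,\tau)$ with $\pi$ non-self-overlapping of size $k$ and $\tau$ arbitrary of size $n-2k$. Your write-up merely makes explicit what the paper leaves implicit, in particular the converse check that every sum $\pi\oplus\tau\oplus\pi$ is indeed self-overlapping, which is a sensible piece of diligence but not a different argument.
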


 The decomposition in Lemma~\ref{lem:SO-decomposition} can be refined if the same lemma is applied to the middle term.
 Thus, we obtain a decomposition into a direct sum of non-self-overlapping permutations.
 For example, the permutation 124356879 can be represented as $1\oplus132\oplus1\oplus132\oplus1$. (Figure~\ref{fig:example_124356879}).
 More generally, the following theorem takes place.

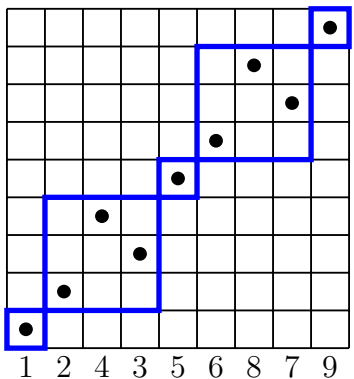
\begin{figure}[ht!]
\centering
\begin{tikzpicture}
 \begin{scope}[xshift = -5cm, scale=0.5]
  \draw[ml] (0,0) grid (9,9);
  \begin{scope}
   \node[dot] (a) at (1-0.5,1-0.5) {};
   \node[dot] (b) at (2-0.5,2-0.5) {};
   \node[dot] (c) at (3-0.5,4-0.5) {};
   \node[dot] (d) at (4-0.5,3-0.5) {};
   \node[dot] (e) at (5-0.5,5-0.5) {};
   \node[dot] (f) at (6-0.5,6-0.5) {};
   \node[dot] (g) at (7-0.5,8-0.5) {};
   \node[dot] (h) at (8-0.5,7-0.5) {};
   \node[dot] (i) at (9-0.5,9-0.5) {};
   \draw (1-0.5,-0.5) node {$1$};
   \draw (2-0.5,-0.5) node {$2$};
   \draw (3-0.5,-0.5) node {$4$};
   \draw (4-0.5,-0.5) node {$3$};
   \draw (5-0.5,-0.5) node {$5$};
   \draw (6-0.5,-0.5) node {$6$};
   \draw (7-0.5,-0.5) node {$8$};
   \draw (8-0.5,-0.5) node {$7$};
   \draw (9-0.5,-0.5) node {$9$};
  \end{scope}
  \draw[mb,cb] (1,1) rectangle ++(3,3);
  \draw[mb,cb] (5,5) rectangle ++(3,3);
  \draw[mb,cb] (0,0) rectangle ++(1,1);
  \draw[mb,cb] (4,4) rectangle ++(1,1);
  \draw[mb,cb] (8,8) rectangle ++(1,1);
 \end{scope}
\end{tikzpicture}
\caption{Decomposition of the permutation $124356879$.}
\label{fig:example_124356879}
\end{figure}

\begin{thm}\label{thm:full_decomposition}
 Any permutation $\sigma$ can be uniquely decomposed into a direct sum of non-self-overlap\-ping permutations,
 \begin{equation}\label{eqn:full_decomposition}
  \sigma
   =
  \pi_1
   \oplus \ldots \oplus
  \pi_m \oplus \tau \oplus \pi_m
   \oplus \ldots \oplus 
  \pi_1,
 \end{equation}
 where $m\geqslant0$ the permutation $\tau$ is, possibly, empty.
\end{thm}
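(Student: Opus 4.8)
The plan is to obtain both existence and uniqueness by induction on the size $n$ of $\sigma$, iterating the decomposition of Lemma~\ref{lem:SO-decomposition} on the central factor. The governing dichotomy is whether $\sigma$ is self-overlapping or not, and I claim this corresponds exactly to the distinction between $m \geqslant 1$ and $m = 0$ in \eqref{eqn:full_decomposition}.

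For existence, I would argue as follows. If $\sigma$ is non-self-overlapping, I simply take $m = 0$ and $\tau = \sigma$. If $\sigma$ is self-overlapping, Lemma~\ref{lem:SO-decomposition} yields a decomposition $\sigma = \pi_1 \oplus \tau' \oplus \pi_1$ with $\pi_1$ non-self-overlapping and $\tau'$ possibly empty. Since the overlapping range furnished by the lemma is at least $1$, the factor $\pi_1$ is non-empty, so $|\tau'| = n - 2|\pi_1| \leqslant n-2 < n$; the induction hypothesis then supplies a decomposition of $\tau'$ of the required palindromic form. Substituting it into $\sigma = \pi_1 \oplus \tau' \oplus \pi_1$ and invoking the associativity of $\oplus$ flattens the nested expression into \eqref{eqn:full_decomposition}, with the same central factor $\tau$, which is non-self-overlapping or empty by construction. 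The small cases $n \in \{0,1\}$ are non-self-overlapping and serve as trivial bases.

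For uniqueness, I first record the dichotomy itself. If $m \geqslant 1$ in a decomposition \eqref{eqn:full_decomposition}, then writing $\sigma = \pi_1 \oplus \rho \oplus \pi_1$ with $\rho$ the central palindrome shows that the three conditions of Definition~\ref{def:self-overlapping} hold for the range $k_1 = |\pi_1|$: the outer blocks occupy the initial and final $k_1$ positions and values, and both realise the pattern $\pi_1$, so these patterns are isomorphic. Hence $\sigma$ is self-overlapping; conversely, if $\sigma$ is self-overlapping then $m \geqslant 1$ by the existence argument. Now, if $\sigma$ is non-self-overlapping, then $m = 0$ is forced and $\tau = \sigma$ is the only possibility. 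If instead $\sigma$ is self-overlapping, then every admissible decomposition has the shape $\sigma = \pi_1 \oplus \rho \oplus \pi_1$ with $\pi_1$ non-self-overlapping, which is precisely a decomposition of the type produced by Lemma~\ref{lem:SO-decomposition}; its uniqueness clause forces $\pi_1$ and $\rho$ to coincide with the factors of that lemma. Since $|\rho| < n$, the induction hypothesis makes the decomposition of $\rho$ unique, and therefore so is that of $\sigma$.

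The step I expect to require the most care is this uniqueness argument, and specifically the claim that the outermost block $\pi_1$ appearing in any admissible decomposition must coincide with the non-self-overlapping block delivered by Lemma~\ref{lem:SO-decomposition}. This is where the requirement that the outer factors be non-self-overlapping is essential: without it one could peel off a larger, self-overlapping outer block and destroy uniqueness, mirroring the minimality of the overlapping range used to prove the lemma. Everything else, namely termination of the iteration (the size drops by at least $2$ at each peel) and the reassembly via associativity of the direct sum, is routine once this identification is in place.
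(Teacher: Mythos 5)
Your proof is correct and follows the paper's overall strategy: existence is obtained exactly as in the paper, by iterating Lemma~\ref{lem:SO-decomposition} on the central factor (the paper phrases this as a finite iteration, you phrase it as induction on $n$; these are the same argument), and your dichotomy between $m=0$ and $m\geqslant 1$ is also implicit in the paper. The one structural difference lies in uniqueness. You discharge it by appealing to the uniqueness clause of Lemma~\ref{lem:SO-decomposition} together with induction on the middle factor. The paper instead proves the identification of the outermost blocks directly: given two decompositions of form~\eqref{eqn:full_decomposition} with $\pi_1 \neq \pi'_1$, these blocks must have different sizes $l < l'$, and then $\pi'_1$ would itself be self-overlapping with overlapping range $l$ (its first and last $l$ positions inherit invariance and pattern-isomorphism from the decomposition of $\sigma$ beginning with $\pi_1$), contradicting that $\pi'_1$ is non-self-overlapping. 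You should be aware that this direct size-comparison argument is exactly the mathematical content hiding behind the lemma's word ``uniquely'': the paper's proof of Lemma~\ref{lem:SO-decomposition} only establishes existence of the decomposition and that $\pi$ is non-self-overlapping, and never spells out uniqueness, which is instead supplied by the argument inside the proof of this very theorem. So your appeal to the lemma's uniqueness clause is not circular and is legitimate given the lemma as stated, but it delegates the key step to a claim the paper leaves unproven at that point; a fully self-contained write-up should include the size-comparison argument (or prove the lemma's uniqueness clause by it).
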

\begin{proof}
 If $\sigma$ is non-self-overlapping, then decomposition~\eqref{eqn:full_decomposition} is done with $m=0$ and $\tau=\sigma$.
 Otherwise, let us apply Lemma~\ref{lem:SO-decomposition} iteratively.
 In other words, express $\sigma$ as $\pi_1 \oplus \tau_1 \oplus \pi_1$.
 If $\tau_1$ is self-overlapping, then express it as $\pi_2 \oplus \tau_2 \oplus \pi_2$, etc.
 Thus, after a finite number of iterations, we obtain decomposition~\eqref{eqn:full_decomposition}.

 Now, let us show the uniqueness of the above decomposition.
 Suppose the contrary, that is, suppose that we have two different decompositions of form~\eqref{eqn:full_decomposition}:
 \[
  \pi_1
   \oplus \ldots \oplus
  \pi_m \oplus \tau \oplus \pi_m
   \oplus \ldots \oplus 
  \pi_1
   =
  \pi'_1
   \oplus \ldots \oplus
  \pi'_{m'} \oplus \tau' \oplus \pi'_{m'}
   \oplus \ldots \oplus 
  \pi'_1.
 \]
 Without loss of generality, we can assume that $\pi_1 \neq \pi'_1$, which means that they are of different sizes, say, $l$ and $l'$.
 However, if $l<l'$, then $\pi'_1$ is self-overlapping with the overlapping range $l$ (see Figure~\ref{fig:decomposition_uniqueness}).
 The case $l'<l$ leads to a similar contradiction.
 Thus, $\pi_1=\pi'_1$, and decomposition~\eqref{eqn:full_decomposition} is unique.
\end{proof}

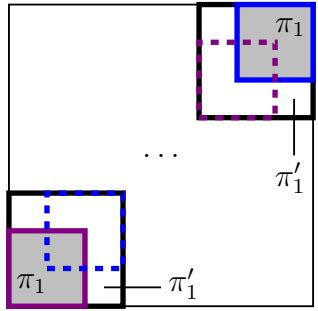
\begin{figure}[ht!]
\centering
\begin{tikzpicture}
 \begin{scope}[scale=0.5]
  \draw[ml] (0,0) rectangle (8,8);
  \draw[mb] (5,8) rectangle (8,5);
  \draw[mb] (3,0) rectangle (0,3);
  \draw[mb,cv,fc] (2,0) rectangle (0,2);
  \draw[mb,cb,fc] (8,6) rectangle (6,8);
  \draw[mb,cv,dashed] (7,5) rectangle (5,7);
  \draw[mb,cb,dashed] (3,1) rectangle (1,3);
  \draw (1-0.4,1-0.4) node {$\pi_1$};
  \draw (5-0.4,1-0.4) node {$\pi'_1$};
  \draw[ml] (2.5,0.5) -- ++(1.5,0);
  \draw (4,4) node {$\ldots$};
  \draw (8-0.6,8-0.6) node {$\pi_1$};
  \draw (8-0.6,4-0.6) node {$\pi'_1$};
  \draw[ml] (7.5,5.5) -- ++(0,-1.5);
 \end{scope}
\end{tikzpicture}
\caption{Uniqueness of decomposition~\eqref{eqn:full_decomposition}.}
\label{fig:decomposition_uniqueness}
\end{figure}

Let us consider the generating functions $\SO(z)$ and $\NO(z)$ of self-overlapping and non-self-overlap\-ping permutations, respectively:
 \[
  \SO(z) = \sum\limits_{n=1}^{\infty} \so_n z^n
  \qquad\mbox{and}\qquad
  \NO(z) = \sum\limits_{n=1}^{\infty} \no_n z^n.
 \]
We formally assume that an empty permutation is neither self-overlapping nor non-self-overlapping.
Hence, the generating function $\P(z)$ of all permutations can be represented as the following sum:
 \begin{equation}\label{eqn:P=1+SO+NO}
  \P(z) = 1 + \SO(z) + \NO(z) = \sum\limits_{n=0}^{\infty} n!\, z^n.
 \end{equation}
This observation, together with Lemma~\ref{lem:SO-decomposition}, helps us establish relations that determine the behavior of $\SO(z)$ and $\NO(z)$.

\begin{thm}\label{thm:ogf_for_NO_and_SO}
 The generating functions $\SO(z)$ and $\NO(z)$ satisfy the following relations:
 \begin{equation}\label{eqn:ogf_for_NO}
  \NO(z) = \P(z) \big(1 - \NO(z^2)\big) - 1
 \end{equation}
 and
 \begin{equation}\label{eqn:ogf_for_SO}
  \SO(z) = \dfrac{1 + \NO(z)}{1 - \NO(z^2)} \cdot \NO(z^2).
 \end{equation}
\end{thm}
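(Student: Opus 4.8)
The plan is to convert the structural decomposition of Lemma~\ref{lem:SO-decomposition} into a single product identity relating $\SO(z)$, $\NO(z^2)$ and $\P(z)$, and then to combine this identity with the elementary relation~\eqref{eqn:P=1+SO+NO} and solve the resulting pair of linear equations for $\SO(z)$ and $\NO(z)$.

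First I would record the key identity
\[
 \SO(z) = \P(z)\,\NO(z^2),
\]
which is simply the generating-function form of Corollary~\ref{cor:SO_finite}. By Lemma~\ref{lem:SO-decomposition}, every self-overlapping permutation of size $n$ corresponds uniquely to a triple $(\pi,\tau,\pi)$, where $\pi$ is non-self-overlapping of some size $k$ and $\tau$ is an arbitrary (possibly empty) permutation of size $n-2k$. The point to emphasise is that the two outer summands are one and the same $\pi$: a non-self-overlapping permutation of size $k$ therefore contributes the factor $z^{2k}$, so the outer blocks are tracked by $\NO(z^2) = \sum_{k\geqslant1}\no_k z^{2k}$ rather than by $\NO(z)^2$. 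The arbitrary middle block $\tau$ contributes the full series $\P(z) = \sum_{m\geqslant0} m!\,z^m$ of all permutations. Since the decomposition is unique, summing over $n$ and $k$ multiplies these two contributions and yields $\SO(z) = \P(z)\,\NO(z^2)$.

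It then remains to solve the linear system. Substituting the product identity into $\P(z) = 1 + \SO(z) + \NO(z)$ gives $\P(z) = 1 + \P(z)\,\NO(z^2) + \NO(z)$, and isolating $\NO(z)$ produces~\eqref{eqn:ogf_for_NO} at once. To obtain~\eqref{eqn:ogf_for_SO}, I would rewrite this as $\P(z)\bigl(1 - \NO(z^2)\bigr) = 1 + \NO(z)$ and multiply the product identity by $\bigl(1 - \NO(z^2)\bigr)$, getting $\SO(z)\bigl(1 - \NO(z^2)\bigr) = \NO(z^2)\,\P(z)\bigl(1 - \NO(z^2)\bigr) = \bigl(1 + \NO(z)\bigr)\NO(z^2)$; dividing by $1 - \NO(z^2)$ gives the stated formula.

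I do not anticipate any real obstacle: all the combinatorial content is already carried by the uniqueness and the two-copies structure of Lemma~\ref{lem:SO-decomposition}. The only step requiring genuine care is the bookkeeping behind the substitution $z \mapsto z^2$, which encodes that the paired outer blocks share a single underlying permutation $\pi$ instead of being chosen independently; once this is handled, both relations follow by routine algebra on formal power series, with no convergence or invertibility concerns since $1 - \NO(z^2)$ has constant term $1$.
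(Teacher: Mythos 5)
Your proof is correct and follows essentially the same route as the paper: both start from the generating-function form $\SO(z) = \P(z)\,\NO(z^2)$ of Lemma~\ref{lem:SO-decomposition}, combine it with relation~\eqref{eqn:P=1+SO+NO} to get~\eqref{eqn:ogf_for_NO}, and then substitute $\P(z) = \bigl(1+\NO(z)\bigr)/\bigl(1-\NO(z^2)\bigr)$ back into the product identity to get~\eqref{eqn:ogf_for_SO}. Your explicit justification of the $z \mapsto z^2$ bookkeeping (the two outer blocks being a single shared $\pi$) is a detail the paper leaves implicit, but it is the same argument.
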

\begin{proof}
 In terms of generating functions, Lemma~\ref{lem:SO-decomposition} can be interpreted as the following relation:
 \[
  \SO(z) = \P(z) \cdot \NO(z^2).
 \]
 At the same time, it follows from \eqref{eqn:P=1+SO+NO} that
 \[
  \SO(z) = \P(z) - \NO(z) - 1.
 \]
 Equating the right-hand sides of these two identities, we obtain relation~\eqref{eqn:ogf_for_NO}.
 Meanwhile, to get~\eqref{eqn:ogf_for_SO}, it is sufficient to take the first of the identities and replace $\P(z)$ with the expression obtained via relation~\eqref{eqn:ogf_for_NO}.
\end{proof}

\begin{rem}\label{rem:meaning_of_equations}
 Relation~\eqref{eqn:ogf_for_NO} rewritten as
 \begin{equation}\label{eqn:ogf_for_NO_revisited}
  \P(z) = \dfrac{1 + \NO(z)}{1 - \NO(z^2)}
 \end{equation}
 is another face of Theorem~\ref{thm:full_decomposition}.
 Indeed, the factor $1 + \NO(z)$ corresponds to the central block of decomposition~\eqref{eqn:full_decomposition},
 while $\big(1 - \NO(z^2)\big)^{-1}$ represents the sequence of non-self-overlapping permutations.
 Note that the argument $z^2$ reflects the fact that each element of the sequence is taken twice.
 Relation~\eqref{eqn:ogf_for_SO} carries a similar meaning.
 The only difference is the presence of the additional factor $\NO(z^2)$ that guarantees that the decomposition consists of at least two blocks.
\end{rem}

\section{Asymptotics}\label{sec:asymptotics}

\begin{thm}\label{thm:asymp_so}
 For any positive integer $r$, the probability that a uniform random permutation $\sigma \in S_n$ is self-overlapping, as $n\to\infty$, satisfies
 \begin{equation}\label{eqn:asymp_so}
  \mathbb{P}(\sigma\mbox{ is self-overlapping})
   = 
  \sum\limits_{k=1}^{r-1} \dfrac{\no_k}{n^{\underline{2k}}}
   +
  O\left(\dfrac{1}{n^{2r}}\right),
 \end{equation}
 where $n^{\underline{k}} = n(n-1)\ldots(n-k+1)$ are the falling factorials.
\end{thm}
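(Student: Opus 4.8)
The plan is to reduce the claimed expansion to a single tail estimate, exploiting the exact formula already at hand. Dividing the identity of Corollary~\ref{cor:SO_finite} by $n!$ and using $(n-2k)!/n! = 1/n^{\underline{2k}}$, one gets the \emph{exact} equality
\[
 \mathbb{P}(\sigma\text{ is self-overlapping}) = \frac{\so_n}{n!} = \sum_{k=1}^{\lfloor n/2\rfloor} \frac{\no_k}{n^{\underline{2k}}}.
\]
Thus the first $r-1$ summands are already present on the right-hand side of~\eqref{eqn:asymp_so}, and the whole statement amounts to proving that the tail $\sum_{k=r}^{\lfloor n/2\rfloor} \no_k/n^{\underline{2k}}$ is $O(n^{-2r})$. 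Its leading term $\no_r/n^{\underline{2r}}$ is already of this order, since $\no_r$ is a constant and $n^{\underline{2r}} \sim n^{2r}$; the task is to show that the remaining terms do not spoil this.

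For the full tail I would keep only the trivial bound $\no_k \leqslant k!$ and estimate $\sum_{k=r}^{\lfloor n/2\rfloor} a_k$, where $a_k := k!/n^{\underline{2k}} = k!\,(n-2k)!/n!$. The main obstacle to anticipate is that one must \emph{not} replace $n^{\underline{2k}}$ by $n^{2k}$: as $n^{\underline{2k}} \leqslant n^{2k}$, this substitution produces a lower bound rather than an upper one, and for $k$ close to $n/2$ the falling factorial is genuinely far smaller than $n^{2k}$ (it equals $n!$ when $2k=n$). Consequently the estimate has to be split according to the magnitude of $k$, and the terms near $k = n/2$ require a separate treatment.

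In the bulk regime $r \leqslant k \leqslant n/4$ I would use the ratio
\[
 \frac{a_{k+1}}{a_k} = \frac{k+1}{(n-2k)(n-2k-1)} \leqslant \frac{8}{3n},
\]
valid for large $n$ because $n-2k \geqslant n/2$ there; this makes the bulk geometrically dominated by its first term, so $\sum_{k=r}^{\lfloor n/4\rfloor} a_k \leqslant 2a_r = O(n^{-2r})$. In the remaining regime $n/4 < k \leqslant n/2$ I would instead invoke $k!\,(n-2k)! \leqslant (n-k)!$ (equivalently $\binom{n-k}{k} \geqslant 1$ for $k \leqslant n/2$), whence each term satisfies $a_k \leqslant (n-k)!/n! = 1/n^{\underline{k}} \leqslant (n/2)^{-n/4}$, since the $k > n/4$ factors of $n^{\underline{k}}$ all exceed $n/2$. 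Summing the at most $n/2$ such terms keeps them smaller than any power of $n$, hence negligible against $n^{-2r}$. Adding the two ranges gives the desired tail bound $O(n^{-2r})$ and finishes the proof. The only delicate point is precisely the interplay flagged above: working throughout with the falling factorial and pairing it with $k!\,(n-2k)! \leqslant (n-k)!$ to tame the contributions near the middle of the summation range.
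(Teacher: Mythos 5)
Your proof is correct and takes essentially the same route as the paper: both start from the exact identity $\so_n = \sum_{k=1}^{\lfloor n/2\rfloor}\no_k\,(n-2k)!$ of Corollary~\ref{cor:SO_finite}, use the trivial bound $\no_k \leqslant k!$, and reduce the theorem to showing that the tail $\sum_{k=r}^{\lfloor n/2\rfloor} k!\,(n-2k)!/n!$ is $O(n^{-2r})$. The only difference is mechanical: the paper bounds every tail term with $k>r$ by $(r+1)!\,(n-2r-2)!$ (the maximum of the unimodal sequence $k!\,(n-2k)!$ over $r< k\leqslant n/2$ for large $n$) and multiplies by the number of terms, whereas you split the range at $n/4$ and use a ratio test in the bulk together with $k!\,(n-2k)!\leqslant (n-k)!$ near $n/2$ --- slightly longer, but it makes explicit the endpoint-comparison that the paper leaves implicit.
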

\begin{proof}
 It follows from Corollary~\ref{cor:SO_finite}, that the counting sequence of self-overlapping permutations satisfies 
 \[
  \so_n
   \leqslant
  \sum\limits_{k=1}^{r-1} \no_k \cdot (n-2k)!
   +
  \sum\limits_{k=r}^{\lfloor n/2\rfloor} k! \cdot (n-2k)!.
 \]
 Let us estimate the second summand.
 For $n$ large enough and $r < k \leqslant n/2$, we have 
 \[
  k! \cdot (n-2k)! \leqslant (r+1)! \cdot (n-2r-2)!.
 \]
 As a consequence,
 \[
  \sum\limits_{k=r}^{\lfloor n/2\rfloor} k! \cdot (n-2k)!
   \leqslant
   r! \cdot (n-2r)! + (\lfloor n/2\rfloor-r) \cdot (r+1)! \cdot (n-2r-2)!
   =
  O\big((n-2r)!\big),
 \]
 which leads us to
 \[
  \so_n
   =
  \sum\limits_{k=1}^{r-1} \no_k \cdot (n-2k)!
   +
  O\big((n-2r)!\big).
 \]
 Now, since $\mathbb{P}(\sigma\mbox{ is self-overlapping}) = \so_n/n!$, it is sufficient to divide the obtained inequality by~$n!$. 
\end{proof}

 Let $\so_n^{(m)}$ be the number of permutations of size $n$ whose decomposition~\eqref{eqn:full_decomposition} consists of $(2m+1)$ summands
 (in particular, $\so_n^{(0)} = \no_n$).
 Define also $\no_n^{(m)}$ to be the number of permutations that can be represented as a direct sum of $m$ non-self-overlapping permutations (in particular, $\no_0^{(0)} = 1$, $\no_n^{(0)} = 0$ for $n>0$, and $\no_n^{(1)} = \no_n$).

\begin{thm}\label{thm:asymp_so^(m)}
 For any positive integer $r$, the probability that decomposition~\eqref{eqn:full_decomposition} of a uniform random permutation $\sigma \in S_n$ consists of $(2m+1)$ summands, as $n\to\infty$, satisfies
 \begin{equation}\label{eqn:asymp_so^(m)}
  \mathbb{P}(\sigma\mbox{ consists of }2m+1\mbox{ blocks})
   = 
  \sum\limits_{k=1}^{r-1}
  \dfrac{\no_k^{(m)} - \no_k^{(m+1)}}{n^{\underline{2k}}}
   +
  O\left(\dfrac{1}{n^{2r}}\right),
 \end{equation}
 where $n^{\underline{k}} = n(n-1)\ldots(n-k+1)$ are the falling factorials.
\end{thm}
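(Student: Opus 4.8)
The plan is to establish an exact finite formula for $\so_n^{(m)}$ generalizing Corollary~\ref{cor:SO_finite}, and then to control its tail by the same mechanism used in the proof of Theorem~\ref{thm:asymp_so}. First I would compute the ordinary generating function of the sequence $(\so_n^{(m)})_n$. By the uniqueness in Theorem~\ref{thm:full_decomposition}, a permutation counted by $\so_n^{(m)}$ is specified by an ordered $m$-tuple of non-self-overlapping blocks $\pi_1,\dots,\pi_m$, each of which occurs twice and hence contributes the argument $z^2$, together with a central block $\tau$ that is either empty or non-self-overlapping. This gives
\[
 \sum_{n}\so_n^{(m)}\,z^n = \bigl(1+\NO(z)\bigr)\,\NO(z^2)^m .
\]
Invoking the identity $1+\NO(z)=\P(z)\bigl(1-\NO(z^2)\bigr)$, which is relation~\eqref{eqn:ogf_for_NO_revisited}, the right-hand side factors as $\P(z)\NO(z^2)^m-\P(z)\NO(z^2)^{m+1}$. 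Since $\NO(z^2)^m=\sum_{j}\no_j^{(m)}z^{2j}$ and $\P(z)=\sum_{i}i!\,z^i$, reading off the coefficient of $z^n$ produces the sought identity
\[
 \so_n^{(m)}=\sum_{j=0}^{\lfloor n/2\rfloor}\bigl(\no_j^{(m)}-\no_j^{(m+1)}\bigr)(n-2j)! ,
\]
whose term $j=0$ vanishes for $m\ge1$ (because $\no_0^{(m)}=0$) and which for $m=0$ correctly reduces to $\no_n=n!-\so_n$.

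Since $\mathbb{P}(\sigma\text{ consists of }2m+1\text{ blocks})=\so_n^{(m)}/n!$ and $(n-2j)!/n!=1/n^{\underline{2j}}$, dividing the identity by $n!$ already displays the announced sum; it therefore remains to absorb the terms with $j\ge r$ into the error. Concretely, I must show that for fixed $m$ and $r$,
\[
 \sum_{j=r}^{\lfloor n/2\rfloor}\bigl|\no_j^{(m)}-\no_j^{(m+1)}\bigr|\,(n-2j)!=O\bigl((n-2r)!\bigr),
\]
so that after division by $n!$ the remainder is $O\bigl(1/n^{2r}\bigr)$. For this I would reuse verbatim the estimate from the proof of Theorem~\ref{thm:asymp_so}: as a function of $j$ on $[r,\lfloor n/2\rfloor]$ the quantity $j!\,(n-2j)!$ is largest at $j=r$, while its value $(\lfloor n/2\rfloor)!$ at the opposite endpoint is negligible against $(n-2r)!$, whence $\sum_{j=r}^{\lfloor n/2\rfloor}j!\,(n-2j)!=O\bigl((n-2r)!\bigr)$.

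The single new ingredient, and the step I expect to be the main obstacle, is to bound the coefficients $\no_j^{(m)}$ by a plain factorial so that the estimate above applies unchanged; a crude bound carrying an exponential factor $2^{j}$ would be too weak. Starting from $\no_j^{(m)}=\sum_{k_1+\dots+k_m=j}\no_{k_1}\cdots\no_{k_m}$ with all $k_i\ge1$, I would use $\no_{k_i}\le k_i!$, the inequality $\prod_i k_i!\le(j-m+1)!$ (merging parts only increases a product of factorials), and the count $\binom{j-1}{m-1}$ of compositions, to obtain
\[
 \no_j^{(m)}\le\binom{j-1}{m-1}(j-m+1)!=\frac{(j-m+1)(j-1)!}{(m-1)!}\le\frac{j!}{(m-1)!}\qquad(m\ge1).
\]
Hence $\no_j^{(m)}+\no_j^{(m+1)}=O(j!)$ for fixed $m$, the tail is bounded by a constant multiple of $\sum_{j=r}^{\lfloor n/2\rfloor}j!\,(n-2j)!$, and the preceding paragraph concludes the proof. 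The only subtlety is the boundary value $m=0$: there the $j=0$ term equals $n!$, so the probability tends to $1$ and the displayed sum must be read as starting from $k=0$; for every $m\ge1$ this term vanishes and the sum begins at $k=1$ exactly as stated.
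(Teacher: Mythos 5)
Your proposal is correct and follows essentially the same route as the paper: an exact convolution identity
$\so_n^{(m)}=\sum_{k}\bigl(\no_k^{(m)}-\no_k^{(m+1)}\bigr)(n-2k)!$
obtained from the uniqueness of decomposition~\eqref{eqn:full_decomposition} (the paper gets it by directly counting permutations whose decomposition has at least $2m+1$ summands and subtracting, you get it by expanding $(1+\NO(z))\NO(z^2)^m=\P(z)\NO(z^2)^m-\P(z)\NO(z^2)^{m+1}$ --- the same argument in generating-function clothing), followed by absorbing the tail $k\geqslant r$ with the very estimate used in Theorem~\ref{thm:asymp_so}. Your write-up is in fact slightly more complete than the paper's proof: the factorial bound $\no_j^{(m)}\leqslant\binom{j-1}{m-1}(j-m+1)!=O(j!)$, which is what legitimizes the paper's ``similarly to the previous theorem'' step, is made explicit (together with the correct reading of $\no_j^{(m)}$ as the coefficient of $\NO(z)^m$, i.e.\ a count of ordered tuples), and your observation that for $m=0$ the sum must start at $k=0$ to recover the leading $1$ of~\eqref{eqn:asymp_no} is a genuine boundary case that the theorem's statement glosses over.
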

\begin{proof}
 Due to Lemma~\ref{lem:SO-decomposition} and Theorem~\ref{thm:full_decomposition}, the number of permutations whose decomposition~\eqref{eqn:full_decomposition} consists of at least $(2m+1)$ summands is equal to
 \[
  \sum\limits_{k=1}^{\lfloor n/2\rfloor} \no_k^{(m)} \cdot (n-2k)!.
 \]
 Therefore, the sequence $\big(\so_n^{(m)}\big)$ satisfies
 \[
  \so_n^{(m)}
   =
  \sum\limits_{k=1}^{\lfloor n/2\rfloor}
   \left( \no_k^{(m)} - \no_k^{(m+1)} \right) \cdot (n-2k)!.
 \]
 Similarly to the previous theorem, for any tail of this sum, we have an estimation
 \[
  \sum\limits_{k=r}^{\lfloor n/2\rfloor}
   \left( \no_k^{(m)} - \no_k^{(m+1)} \right) \cdot (n-2k)!
   =
  O\big((n-2r)!\big).
 \] 
 Thus, dividing the above relation by $n!$, we obtain asymptotic expression~\eqref{eqn:asymp_so^(m)}.
\end{proof}

\section{Asymptotics of pattern distributions}\label{sec:pattern_asymptotics}

Let $\pi$ be a permutation of size $p$.
Denote by $a_{n,m}(\pi)$ the number of permutations of size $n$ with exactly $m$ very tight occurrences of the pattern $\pi$.
Assume that both $\pi$ and its reverse $\overline{\pi}$ are non-self-overlapping.
In this case, as it was showed by Myers~\cite{Myers2002}, we have
\begin{equation}\label{eq:Myers}
 a_{n,m}(\pi) = 
 \sum\limits_{k=m}^{\lfloor n/(p-1)\rfloor}
  (-1)^{m-k}\binom{k}{m}
  \binom{n-(p-1)k}{k}\big(n-(p-1)k\big)!.
\end{equation}
The goal of this section is to answer the following Myers' question: for a fixed constant $m$,
what is the asymptotics of the probability that a pattern $\pi$
very tightly occurs exactly $m$ times in a permutation of size $n$, as $n\to\infty$?
A possible answer is given by the following theorem.

\begin{thm}\label{thm:patt_asymp_1}
 If $\pi \in S_p$ is such a permutation that both $\pi$ and $\overline{\pi}$ are non-self-overlapping,
 then for any positive integer $r$, the probability that a uniform random permutation $\sigma \in S_n$ has exactly $m$ very tight occurrences of $\pi$, as $n\to\infty$, satisfies
 \begin{equation}\label{eqn:patt_asymp_1}
  \mathbb{P}(\pi\mbox{ very tightly occurs in }\sigma\mbox{ }m\mbox{ times})
   = \dfrac{1}{m!}
  \sum\limits_{k=m}^{r-1}
  \dfrac{(-1)^{k-m}}{(k-m)!} \cdot
  \dfrac{\big(n-(p-1)k\big)^{\underline{k}}}{n^{\underline{(p-1)k}}}
   +
  O\left(\dfrac{1}{n^{(p-2)r}}\right),
 \end{equation}
 where $n^{\underline{k}} = n(n-1)\ldots(n-k+1)$ are the falling factorials.
 In particular,
 \[
  \mathbb{P}(\pi\mbox{ very tightly occurs in }\sigma\mbox{ }m\mbox{ times})
   =
  \dfrac{1}{m!} \cdot \dfrac{1}{n^{(p-2)m}}
   +
  O\left(\dfrac{1}{n^{(p-2)m+1}}\right).
 \]
\end{thm}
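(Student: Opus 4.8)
The plan is to begin from the exact identity $\mathbb{P}(\pi\text{ very tightly occurs in }\sigma\text{ }m\text{ times}) = a_{n,m}(\pi)/n!$ and to rewrite each summand of Myers' formula \eqref{eq:Myers} in terms of falling factorials. Using $\binom{k}{m} = k!/(m!(k-m)!)$, $\binom{n-(p-1)k}{k} = (n-(p-1)k)^{\underline{k}}/k!$, and $(n-(p-1)k)!/n! = 1/n^{\underline{(p-1)k}}$, each term of \eqref{eq:Myers} divided by $n!$ collapses to exactly $\frac{1}{m!}\frac{(-1)^{k-m}}{(k-m)!}\frac{(n-(p-1)k)^{\underline{k}}}{n^{\underline{(p-1)k}}}$, the precise summand appearing in \eqref{eqn:patt_asymp_1}. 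Thus the probability equals the finite sum over $m\le k\le\lfloor n/(p-1)\rfloor$ of this summand, and it remains only to truncate at $k=r-1$ and control the tail.

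Next I would fix the order of magnitude of a single summand. For fixed $k$, the numerator $(n-(p-1)k)^{\underline k}$ is a product of $k$ factors each $\Theta(n)$ while the denominator $n^{\underline{(p-1)k}}$ is a product of $(p-1)k$ such factors, so the $k$-th term is $\Theta(n^{-(p-2)k})$. The hypothesis that both $\pi$ and $\overline{\pi}$ are non-self-overlapping forces $p\ge 3$ (for $p=2$ one of $12,21$ is self-overlapping), hence $p-2\ge 1$ and the terms genuinely decay. In particular the head is the displayed truncation, and the dominant tail contribution is the $k=r$ term, of order $n^{-(p-2)r}$, which is the claimed error.

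The core of the argument is therefore the tail bound $\sum_{k=r}^{\lfloor n/(p-1)\rfloor}\frac{1}{(k-m)!}\frac{(n-(p-1)k)^{\underline k}}{n^{\underline{(p-1)k}}}=O(n^{-(p-2)r})$. I would first note that the summand vanishes for $k>n/p$, since then $0\le n-(p-1)k<k$ makes $\binom{n-(p-1)k}{k}=0$; so effectively $m\le k\le\lfloor n/p\rfloor$ and all factorials are well-defined. Writing the ratio as $((n-(p-1)k)!)^2/((n-pk)!\,n!)$ shows it is at most $1$, and factoring $n^{\underline{(p-1)k}}=n^{\underline{k}}(n-k)^{\underline{(p-2)k}}$ together with $(n-(p-1)k)^{\underline k}\le n^{\underline k}$ bounds each term by $1/((k-m)!\,(n-k)^{\underline{(p-2)k}})$. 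The main obstacle, and the only delicate point, is that this last denominator degenerates when $k$ is comparable to $n$; I would resolve it with a two-regime split. For moderate $k$, say $r\le k\le\lfloor n/(2p)\rfloor$, every factor of $(n-k)^{\underline{(p-2)k}}$ exceeds $n/2$, so the term is at most $\frac{1}{(k-m)!}(2/n)^{(p-2)k}$, a geometric-type series dominated by its first term $O(n^{-(p-2)r})$. For large $k$, $\lfloor n/(2p)\rfloor<k\le\lfloor n/p\rfloor$, I would instead use the crude bound (ratio $\le 1$), so each of the at most $n$ terms is at most $1/(\lfloor n/(2p)\rfloor-m)!$, whose factorial decay beats any power of $n$; this regime is thus $o(n^{-(p-2)r})$. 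Combining the two regimes yields the tail bound and hence \eqref{eqn:patt_asymp_1}.

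Finally, for the ``in particular'' statement I would set $r=m+1$ in \eqref{eqn:patt_asymp_1}, leaving the single leading term $\frac{1}{m!}\frac{(n-(p-1)m)^{\underline m}}{n^{\underline{(p-1)m}}}$ plus an error $O(n^{-(p-2)(m+1)})=O(n^{-(p-2)m-1})$; expanding the leading falling-factorial ratio as $n^{-(p-2)m}(1+O(1/n))$ then gives the asserted $\frac{1}{m!}n^{-(p-2)m}+O(n^{-(p-2)m-1})$.
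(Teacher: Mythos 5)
Your proof is correct, and its overall skeleton matches the paper's: normalize Myers' formula \eqref{eq:Myers} by $n!$, rewrite each summand as $\frac{1}{m!}\frac{(-1)^{k-m}}{(k-m)!}\frac{(n-(p-1)k)^{\underline{k}}}{n^{\underline{(p-1)k}}}$, truncate at $k=r-1$, and estimate the tail. Where you genuinely diverge is in the tail estimate, which is the entire substance of the proof. The paper works with the unnormalized quantities $d_k(n)=(n-(p-1)k)^{\underline{k}}\,(n-(p-1)k)!$, asserts without proof that $d_k(n)\leqslant d_{r+1}(n)$ for all $k>r+1$, and bounds the tail by the $k=r$ term plus $(\lfloor n/(p-1)\rfloor-r)\,d_{r+1}(n)/(r-m)!$; this yields $O(d_r(n))$ only after the further, implicit, observation that $n\,d_{r+1}(n)=O(d_r(n))$, which holds precisely because $p\geqslant 3$. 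Your two-regime split avoids any monotonicity claim: after bounding each normalized term by $1/\bigl((k-m)!\,(n-k)^{\underline{(p-2)k}}\bigr)$, you get geometric-type decay $(2/n)^{(p-2)k}$ for $r\leqslant k\leqslant n/(2p)$, while for $n/(2p)<k\leqslant n/p$ the factor $1/(k-m)!$ is superpolynomially small and the falling-factorial ratio is at most $1$. This buys rigor exactly where the paper is weakest --- the region where $k$ is comparable to $n/p$ and the factorials degenerate --- at the cost of a slightly longer argument. You also make explicit two points the paper leaves tacit: that the hypothesis rules out $p=2$, so that $p-2\geqslant 1$ and the terms genuinely decay, and the derivation of the ``in particular'' clause by taking $r=m+1$ and expanding the leading term.
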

\begin{proof}
 Relation~\eqref{eq:Myers} implies that
 \[
  a_{n,m}(\pi) = \dfrac{1}{m!}
  \left(
  \sum\limits_{k=m}^{r-1}
   \dfrac{(-1)^{k-m}}{(k-m)!} \cdot d_k(n)
   +
  \sum\limits_{k=r}^{\lfloor n/(p-1)\rfloor}
   \dfrac{(-1)^{k-m}}{(k-m)!} \cdot d_k(n)
  \right),
 \]
 where
 \(
  d_k(n) = \big(n-(p-1)k\big)^{\underline{k}} \cdot \big(n-(p-1)k\big)!
 \).
 Clearly, $d_k(n) \leqslant d_{r+1}(n)$ for any $k > r+1$. Therefore,
 \[
  \sum\limits_{k=r}^{\lfloor n/(p-1)\rfloor}
   \dfrac{1}{(k-m)!} \cdot d_k(n)
   \leqslant
  \dfrac{d_r(n)}{(r-m)!} + \dfrac{d_{r+1}(n)}{(r-m)!} \cdot
   \left(\left \lfloor\dfrac{n}{p-1}\right\rfloor - r\right)
   =
  O\big(d_r(n)\big).
 \]
 Thus, we obtain
 \[
  a_{n,m}(\pi) = \dfrac{1}{m!}
  \sum\limits_{k=m}^{r-1}
   \dfrac{(-1)^{k-m}}{(k-m)!} \cdot d_k(n)
   +
  O\big(d_r(n)\big),
 \]
 which is sufficient to be divided by $n!$ to get relation~\eqref{eqn:patt_asymp_1}.
\end{proof}

In some sense, it would be more natural to have a complete asymptotic expansion of $a_{n,m}(\pi)/n!$ over the basis $1/n^{\underline{k}}$, as it is the case for self-overlapping permutations (see relations \eqref{eqn:asymp_so} and \eqref{eqn:asymp_so^(m)}).
This is indeed possible, which is reflected by the following theorem.

\begin{thm}\label{thm:patt_asymp_2}
 If $\pi \in S_p$ is such a permutation that both $\pi$ and $\overline{\pi}$ are non-self-overlapping,
 then for any positive integer $r$, the probability that a uniform random permutation $\sigma \in S_n$ has exactly $m$ very tight occurrences of $\pi$, as $n\to\infty$, satisfies
 \begin{equation}\label{eqn:patt_asymp_2}
  \mathbb{P}(\pi\mbox{ very tightly occurs in }\sigma\mbox{ }m\mbox{ times})
   = 
  \sum\limits_{k=(p-2)m}^{r-1}
  \dfrac{c_k}{n^{\underline{k}}}
   +
  O\left(\dfrac{1}{n^r}\right),
 \end{equation}
 where $n^{\underline{k}} = n(n-1)\ldots(n-k+1)$ are the falling factorials and
 \[
  c_k = (-1)^m
  \sum\limits_{i=\lceil k/(p-1)\rceil}^{\lfloor k/(p-2) \rfloor}
   \binom{i}{m}
   \dfrac{(-1)^{(p-1)i-k}}{\big((p-1)i-k\big)!}
   \binom{i}{k-(p-2)i}.
 \]
\end{thm}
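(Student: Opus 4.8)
The plan is to take the already-truncated form of Myers' relation from the proof of Theorem~\ref{thm:patt_asymp_1} and re-expand each of its terms in the basis $\{1/n^{\underline{k}}\}$. Keeping the notation $d_i(n) = \big(n-(p-1)i\big)^{\underline{i}}\,\big(n-(p-1)i\big)!$ of that proof, and choosing the cut-off $R=r$, one has for $n$ large enough
\[
 \frac{a_{n,m}(\pi)}{n!}
  =
 \frac{1}{m!}\sum_{i=m}^{r-1}\frac{(-1)^{i-m}}{(i-m)!}\cdot\frac{d_i(n)}{n!}
  +
 O\left(\frac{1}{n^{(p-2)r}}\right),
 \qquad
 \frac{d_i(n)}{n!}=\frac{\big(n-(p-1)i\big)^{\underline{i}}}{n^{\underline{(p-1)i}}},
\]
and since $p\geqslant 3$ this error is $O(1/n^{r})$. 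Everything then hinges on the exact (rational-function) identity
\begin{equation}\label{eqn:per_term_expansion}
 \frac{\big(n-(p-1)i\big)^{\underline{i}}}{n^{\underline{(p-1)i}}}
  =
 \sum_{j=0}^{i}(-1)^{j}\,\frac{\big(i^{\underline{j}}\big)^{2}}{j!}\cdot\frac{1}{n^{\underline{(p-2)i+j}}},
\end{equation}
which reshuffles a single term of degree $-(p-2)i$ into a finite combination of reciprocal falling factorials.

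To establish \eqref{eqn:per_term_expansion} I would clear denominators. Substituting $l=i-j$ and using $\dfrac{n^{\underline{(p-1)i}}}{n^{\underline{(p-1)i-l}}}=\big(n-(p-1)i+l\big)^{\underline{l}}$, the claim becomes, with $y=n-(p-1)i$, the polynomial identity $y^{\underline{i}}=i!\sum_{l=0}^{i}(-1)^{i-l}\binom{i}{l}\binom{y+l}{l}$, that is
\[
 \binom{y}{i}
  =
 \sum_{l=0}^{i}(-1)^{i-l}\binom{i}{l}\binom{y+l}{l}.
\]
This closed binomial identity I would prove by rewriting $\binom{y+l}{l}=(-1)^{l}\binom{-y-1}{l}$ through upper negation, applying the Vandermonde convolution $\sum_{l}\binom{i}{l}\binom{-y-1}{l}=\binom{i-y-1}{i}$, and negating once more to recover $\binom{y}{i}$. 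I expect this to be the main obstacle: the crux is recognizing that the change of basis does not produce an unwieldy triangular system of connection coefficients but collapses to this single Vandermonde-type identity.

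With \eqref{eqn:per_term_expansion} in hand, I would substitute it into the truncated sum, obtaining a double sum over $i$ and $j$, and reindex by $k=(p-2)i+j$. For fixed $k$ the constraint $0\leqslant j\leqslant i$ forces $\lceil k/(p-1)\rceil\leqslant i\leqslant\lfloor k/(p-2)\rfloor$, which is exactly the range occurring in $c_k$. Collecting the coefficient of $1/n^{\underline{k}}$ and using $\frac{1}{m!(i-m)!}=\frac{1}{i!}\binom{i}{m}$ together with $\frac{(i^{\underline{j}})^{2}}{i!\,j!}=\frac{1}{(i-j)!}\binom{i}{j}$, while noting that $(p-1)i-k=i-j$ makes the two sign conventions $(-1)^{i-m+j}$ and $(-1)^{m+(i-j)}$ coincide (they differ by $(-1)^{2(j-m)}$), reproduces precisely the stated coefficient $c_k$.

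Finally I would control which values of $k$ survive. For $k\leqslant r-1$ the coefficient of $1/n^{\underline{k}}$ is already the full $c_k$, because every index contributing to $c_k$ satisfies $i\leqslant\lfloor k/(p-2)\rfloor\leqslant k\leqslant r-1$ (here $p\geqslant 3$ is used) and hence lies inside the truncated range $m\leqslant i\leqslant r-1$; no contribution is lost. The remaining basis terms carry $r\leqslant k\leqslant (p-1)(r-1)$: there are only finitely many, each being $O(1/n^{\underline{k}})=O(1/n^{r})$, so they are absorbed into the error along with the $O(1/n^{(p-2)r})\subseteq O(1/n^{r})$ tail from the Myers truncation. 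Since $a_{n,m}(\pi)/n!$ is exactly the probability in question, this yields expansion~\eqref{eqn:patt_asymp_2}.
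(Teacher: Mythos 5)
Your proof is correct, and its skeleton is the same as the paper's: start from the truncated expansion of Theorem~\ref{thm:patt_asymp_1}, re-expand each term $\big(n-(p-1)i\big)^{\underline{i}}/n^{\underline{(p-1)i}}$ as a finite combination of reciprocal falling factorials $1/n^{\underline{(p-2)i+j}}$, then reindex by $k=(p-2)i+j$, which produces exactly the range $\lceil k/(p-1)\rceil\leqslant i\leqslant\lfloor k/(p-2)\rfloor$ and the coefficients $c_k$. Your identity \eqref{eqn:per_term_expansion} is precisely the intermediate identity in the paper's proof (with $\binom{k}{i}\,k^{\underline{i}}=(k^{\underline{i}})^2/i!$). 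Where you genuinely diverge is in how this identity is established: the paper derives it from Lemma~\ref{lem:tech}, a rising/falling-factorial expansion $(n-k)^{\overline{l}}=\sum_{i}\frac{(-1)^i}{i!}k^{\underline{i}}l^{\underline{i}}n^{\overline{l-i}}$ proved by induction on $l$, followed by the substitution $N=n-(p-1)k+1$; you instead clear denominators and collapse the claim to the single binomial identity $\binom{y}{i}=\sum_{l}(-1)^{i-l}\binom{i}{l}\binom{y+l}{l}$, dispatched by upper negation and Vandermonde convolution. Both are valid; your route avoids introducing rising factorials and an induction, at the cost of the (correct) bookkeeping in the reduction to the binomial identity, while the paper's lemma is stated in a form reusable beyond this theorem. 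A further point in your favor: your handling of the error term is more explicit than the paper's, which reverses the order of summation over formally infinite sums, whereas you track the finitely many leftover basis terms with $r\leqslant k\leqslant(p-1)(r-1)$ and absorb each into $O(1/n^r)$, and you correctly note that $O(1/n^{(p-2)r})\subseteq O(1/n^r)$ requires $p\geqslant 3$ --- which indeed holds, since for $p=2$ either $\pi=12$ or $\overline{\pi}=12$ would be self-overlapping, contradicting the hypothesis.
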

\begin{ex}
 If $m=0$, then relation~\eqref{eqn:patt_asymp_2} corresponds to the probability that a uniform permutation $\sigma\in S_n$ very tightly avoids a pattern $\pi$.
 For instance, for $\pi=132$ this relation reads
 \[
  \mathbb{P}(\sigma\mbox{ very tightly avoids }\pi)
   =
  1 -
  \dfrac{1}{n} +
  \dfrac{3}{2n^{\underline{2}}} -
  \dfrac{13}{6n^{\underline{3}}} +
  \dfrac{61}{24n^{\underline{4}}} -
  \dfrac{441}{120n^{\underline{5}}} +
  \dfrac{3031}{720n^{\underline{6}}} -
  \dfrac{28813}{5040n^{\underline{7}}} +
  O\left(\dfrac{1}{n^8}\right).
 \]
\end{ex}

Before passing to the proof of Theorem~\ref{thm:patt_asymp_2}, we need to establish one technical lemma related to the rising and falling factorials, that is
 \[
  n^{\overline{k}}=n(n+1)\ldots(n+k-1)
  \qquad\mbox{and}\qquad
  n^{\underline{k}}=n(n-1)\ldots(n-k+1), 
 \]
respectively.

\begin{lem}\label{lem:tech}
 For any non-negative integers $n$, $k$ and $l$,
 \[
  (n-k)^{\overline{l}} =
  \sum\limits_{i=0}^{l}
   \dfrac{(-1)^i}{i!} \;
   k^{\underline{i}} \;
   l^{\underline{i}} \;
   n^{\overline{l-i}}.
 \]
\end{lem}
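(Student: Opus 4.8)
The plan is to read the two factorials off a single generating function identity. Recall that, for every non-negative integer $m$, Newton's generalized binomial theorem gives, as formal power series in $x$,
\[
 (1-x)^{-m} = \sum_{j\geqslant 0} \binom{m+j-1}{j}\, x^j = \sum_{j\geqslant 0} \frac{m^{\overline{j}}}{j!}\, x^j ,
\]
since $\binom{m+j-1}{j} = m^{\overline{j}}/j!$; dually, for a non-negative integer $k$ the ordinary binomial theorem gives the polynomial expansion
\[
 (1-x)^{k} = \sum_{i\geqslant 0} \binom{k}{i}(-x)^i = \sum_{i\geqslant 0} \frac{(-1)^i\, k^{\underline{i}}}{i!}\, x^i ,
\]
using $\binom{k}{i} = k^{\underline{i}}/i!$. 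The whole lemma will come from multiplying these two series.

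First I would write $(1-x)^{-(n-k)} = (1-x)^{-n}\,(1-x)^{k}$ and extract the coefficient of $x^l$ on both sides. On the left the coefficient is $(n-k)^{\overline{l}}/l!$, again by the first expansion applied to $m=n-k$. On the right the Cauchy product of the two displayed series contributes
\[
 \sum_{i=0}^{l} \frac{(-1)^i\, k^{\underline{i}}}{i!} \cdot \frac{n^{\overline{l-i}}}{(l-i)!} .
\]
Equating the two and multiplying through by $l!$ yields
\[
 (n-k)^{\overline{l}} = \sum_{i=0}^{l} \frac{(-1)^i}{i!}\, k^{\underline{i}} \cdot \frac{l!}{(l-i)!} \cdot n^{\overline{l-i}} ,
\]
and since $l!/(l-i)! = l^{\underline{i}}$ this is precisely the claimed formula. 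The single point requiring attention is this last rewriting of $l!/(l-i)!$ as the falling factorial $l^{\underline{i}}$, which is exactly what produces the factor $l^{\underline{i}}$ in the statement; everything else is routine coefficient bookkeeping.

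The computation above proves the identity for all non-negative integers $n$, $k$, $l$, which is all the lemma claims (and, since both sides are polynomials in $n$ and $k$ of total degree $l$, it fixes the identity as a polynomial one as well). If a purely algebraic derivation is preferred, the same equality reduces, after converting every factorial to a binomial coefficient, to $\binom{n-k+l-1}{l} = \sum_{i=0}^{l} (-1)^i \binom{k}{i}\binom{n+l-i-1}{l-i}$; applying upper negation $\binom{m+j-1}{j} = (-1)^j\binom{-m}{j}$ to each binomial converts this into $\binom{k-n}{l} = \sum_{i=0}^{l}\binom{k}{i}\binom{-n}{l-i}$, which is Vandermonde's convolution with parameters $k$ and $-n$. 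I expect no genuine obstacle: the content is classical, and the only real work is keeping the dictionary between rising/falling factorials and binomial coefficients straight.
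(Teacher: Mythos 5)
Your proof is correct, but it takes a genuinely different route from the paper. The paper proves the lemma by induction on $l$: it uses the recurrence $(n-k)^{\overline{l+1}} = \big(n-(k-1)\big)^{\overline{l+1}} - (l+1)\big(n-(k-1)\big)^{\overline{l}}$, telescopes, applies the induction hypothesis, and finishes with the summation formula $\sum_{s=0}^{k-1} s^{\underline{i}} = k^{\underline{i+1}}/(i+1)$. You instead extract the coefficient of $x^l$ in $(1-x)^{-(n-k)} = (1-x)^{-n}(1-x)^{k}$, which exposes the identity as Chu--Vandermonde convolution in disguise; this is shorter and more conceptual, placing the lemma in a classical context, whereas the paper's induction is elementary and self-contained. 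One small point of care in your argument: you state the expansion $(1-x)^{-m} = \sum_j \frac{m^{\overline{j}}}{j!} x^j$ only for non-negative $m$, yet apply it to $m = n-k$, which may be negative since the lemma allows $k > n$. This is not a real gap --- the same expansion holds for every integer $m$ (for $m \leqslant 0$ it is just the ordinary binomial theorem, since $m^{\overline{j}} = (-1)^j (-m)^{\underline{j}}$), and you in any case close the loophole yourself both via the polynomial-identity remark (both sides are polynomials in $n$ and $k$ agreeing on infinitely many values) and via the Vandermonde form $\binom{k-n}{l} = \sum_{i=0}^{l}\binom{k}{i}\binom{-n}{l-i}$, which is a polynomial identity in its parameters. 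It would be cleaner to state the expansion for all integers $m$ from the start, but as written your argument is sound.
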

\begin{proof}
 Let us apply induction on $l$.
 The base case holds, since $(n-k)^{\overline{0}} = 1 = n^{\overline{0}}$.
 Suppose that the statement holds for some arbitrary $l$.
 Therefore,
 \begin{align*}
  (n-k)^{\overline{l+1}}
   &=
    \big(n-(k-1)\big)^{\overline{l+1}} - (l+1)\big(n-(k-1)\big)^{\overline{l}} \\
   &=
    n^{\overline{l+1}} -
    (l+1)\sum\limits_{s=0}^{k-1}(n-s)^{\overline{l}} \\
   &=
    n^{\overline{l+1}} -
    (l+1)\sum\limits_{s=0}^{k-1}
     \sum\limits_{i=0}^{l}
      \dfrac{(-1)^i}{i!} \;
      s^{\underline{i}} \;
      l^{\underline{i}} \;
      n^{\overline{l-i}} \\
   &=
    n^{\overline{l+1}} +
    \sum\limits_{i=0}^{l}
     \dfrac{(-1)^{i+1}}{i!} \;
     (l+1)^{\underline{i+1}} \;
     n^{\overline{(l+1)-(i+1)}}.
     \sum\limits_{s=0}^{k-1}
      s^{\underline{i}}.
 \end{align*}
 To finish the proof, it is sufficient to note that
 \[
  \sum\limits_{s=0}^{k-1}
   s^{\underline{i}} =
  \sum\limits_{s=0}^{k-1}
   \dfrac{(s+1)^{\underline{i+1}}-s^{\underline{i+1}}}{i+1} =
   \dfrac{k^{\underline{i+1}}}{i+1}.
 \]
\end{proof}

\begin{proof}[Proof of Theorem~\ref{thm:patt_asymp_2}]
 The main idea is to take expression~\eqref{eqn:patt_asymp_1} and rewrite it with the help of Lemma~\ref{lem:tech}.
 For simplicity, let us denote $N = n - (p-1)k + 1$.
 Thus, Lemma~\ref{lem:tech} applied to $l=k$ gives us
 \[
  \dfrac{\big(n-(p-1)k\big)^{\underline{k}}}{n^{\underline{(p-1)k}}}
   =
    \dfrac{(N-k)^{\overline{k}}}{N^{\overline{(p-1)k}}}
   =
    \sum\limits_{i=0}^{k}
     \dfrac{(-1)^{i}\;(k^{\underline{i}})^2\; N^{\overline{k-i}}}
     {i! \cdot N^{\overline{(p-1)k}}}
   =
    \sum\limits_{i=0}^{k}
     \binom{k}{i}
     \dfrac{(-1)^{i}\; k^{\underline{i}}\;}
     {(N+k-i)^{\overline{(p-2)k+i}}}.
 \]
 Taking into account that $(N+k-i)^{\overline{(p-2)k+i}} = n^{\underline{(p-2)k+i}}$,
 we can rewrite expression~\eqref{eqn:patt_asymp_1} as
 \begin{equation}\label{eqn:patt_asymp_3}
  \mathbb{P}(\pi\mbox{ very tightly occurs in }\sigma\mbox{ }m\mbox{ times})
   =
  \sum\limits_{k=m}^{r-1} a_k
   \sum\limits_{i=0}^{k} b_{k,i}\; e_{(p-2)k+i}
   +
  O\left(\dfrac{1}{n^{(p-2)r}}\right),
 \end{equation}
 where 
 \[
  a_k = (-1)^m \binom{k}{m},
   \qquad
  b_{k,i} = \dfrac{(-1)^{k-i}}{(k-i)!}\binom{k}{i},
   \qquad
  e_k = \dfrac{1}{n^{\underline{k}}}.
 \]
 Now, reverse the summation order in the right hand side of relation~\eqref{eqn:patt_asymp_3}.
 Designating $s = (p-2)k+i$, we have
 \begin{align*}
  \sum\limits_{k=m}^{\infty} a_k
   \sum\limits_{i=0}^{\infty} b_{k,i} \; {\bf 1}_{i\leqslant k}\; e_{(p-2)k+i}
  &=
  \sum\limits_{k=m}^{\infty} a_k
   \sum\limits_{s=(p-2)k}^{\infty}
    b_{k,s-(p-2)k}\;
    {\bf 1}_{s\leqslant(p-1)k} \;
    e_s \\
  &=
  \sum\limits_{s=(p-2)m}^{\infty} e_s
   \sum\limits_{k=\lceil s/(p-1)\rceil}^{\lfloor s/(p-2) \rfloor}
    a_k \;
    b_{k,s-(p-2)k},
 \end{align*}
 where ${\bf 1}$ is the indicator function.
 Substituting $e_s$, $a_k$ and $b_{k,s-(p-2)k}$ leads to the desired form of the asymptotics.
\end{proof}

\section{Conclusion}\label{sec:conclusion}

As we have seen in Section~\ref{sec:structure}, each permutation can be represented as a direct sum of non-self-overlapping ones.
Therefore, we can consider non-self-overlapping permutations as a sort of basic irreducible blocks that can serve for constructing the whole class of permutations.
There are other examples of this kind in the literature related to permutations.
 One of them concerns so called \emph{indecomposable permutations}, that is, permutations with no proper invariant interval of the form $\{1,\ldots,k\}$ (see, for example, the paper of Cori \cite{Cori2009}).
It is known that each permutation can be represented as a sequence of indecomposable ones.
In particular, the generating function $\IP(z)$ of indecomposable permutations satisfies
 \[
  \IP(z) = 1 - \dfrac{1}{\P(z)}.
 \]

In the class of permutations, the non-self-overlapping ones form the majority.
In other words, almost all permutations are non-self-overlapping.
The same is true for indecomposable permutations.
Moreover, this analogy can be extended to complete asymptotic expansions.
It was Comtet \cite{Comtet1972} who first studied the probability that a~large permutation is indecomposable, and established its expansion over the basis $1/n^{\underline{k}}$.
Later, it turned out that the involved coefficients have combinatorial meaning.
More precisely, the following result was recently shown \cite{Nurligareev2022}:
for any positive integer $r$, the probability that a uniform random permutation $\sigma\in S_n$ is indecomposable satisfies
 \begin{equation}\label{eqn:asymp_ip}
  \mathbb{P}\big(\sigma\mbox{ is indecomposable}\big)
   =
  1 - \sum\limits_{k=1}^{r-1}
   \dfrac{2\ip_k-\ip_k^{(2)}}{n^{\underline{k}}}
   +
  O\left(\dfrac{1}{n^r}\right),
 \end{equation}
where $(\ip_k)$ is the counting sequence of indecomposable permutations, and $(\ip_k^{(2)})$ counts permutations with exactly two indecomposable parts.
From Theorem~\ref{thm:asymp_so}, we can see that the asymptotic expansion of the probability that a uniform random permutation is non-self-overlapping has the same spirit:
 \begin{equation}\label{eqn:asymp_no}
  \mathbb{P}(\sigma\mbox{ is non-self-overlapping})
   = 
  1 - \sum\limits_{k=1}^{r-1} \dfrac{\no_k}{n^{\underline{2k}}}
   +
  O\left(\dfrac{1}{n^{2r}}\right).
 \end{equation}

Another example of irreducibilities involves \emph{simple permutations} that do not map non-trivial intervals onto intervals (see \cite{AlbertAtkinsonKlazar2003}).
The generating function $\SP(z)$ of simple permutations of size at least $4$ can be found iteratively from the equation
 \[
  \dfrac{P(z) - P^2(z)}{1 + P(z)}
   =
  z + \SP\big(\P(z)\big).
 \]
The asymptotic expansion of the probability that a uniform random permutation $\sigma \in S_n$ is simple was established by Borinsky \cite{Borinsky2018}:
 \begin{equation}\label{eqn:asymp_sp}
  \mathbb{P}(\sigma\mbox{ is simple})
   = 
  \dfrac{1}{e^2}\left(
   1
    -
   \dfrac{4}{n}
    +
   \dfrac{2}{n^{\underline{2}}}
    -
   \dfrac{40}{3n^{\underline{3}}}
    -
   \dfrac{182}{3n^{\underline{4}}}
    -
   \ldots
  \right).
 \end{equation}
Again, we observe the role of the basis $1/n^{\underline{k}}$.
However, compared to the cases related to non-self-overlapping and indecomposable permutations, this expansion has two important differences.
First, the leading term is not $1$ anymore: the proportion of simple permutations tends to $e^{-2}$.
Second, the involved coefficients are not integers anymore.

These observations raise a number of questions.
First of all, it would be of interest to unite the three above discussed permutation classes under the same theory that could explain both the similarities and differences.
Relations~\eqref{eqn:asymp_so^(m)} and~\eqref{eqn:patt_asymp_2} could also fit into this theory if we added the ability to account for various statistics.
A priori, this question is quite complicated.
It would be natural to try employing the \emph{wreath product} introduced by Atkinson and Stitt~\cite{AtkinsonStitt2002} for studying restricted permutations.
Indeed, constructing the class of permutations from the simple ones admits description in terms of the wreath product~\cite{AlbertAtkinson2005}; that is true for indecomposable permutations as well.
However, it looks like the non-self-overlapping permutations do not allow such a description.

Another question is the following: is it possible to naturally modify the basis in asymptotics~\eqref{eqn:asymp_sp}, so that the coefficients become integers?
At first glance, it seems that it could be sufficient to divide the basis vectors $1/n^{\underline{k}}$ by $k!$.
This change suggests switching from the ordinary generating functions to the exponential ones.
That trick works well for establishing complete asymptotic expansions of indecomposable permutations and indecomposable perfect matchings, see \cite[Chapter 9]{Nurligareev2022}.
The potential of its applicability for simple permutations needs to be verified. 

Finally, if we get a positive answer to the previous question, what is the combinatorial interpretation of these coefficients?
Unfortunately, there is no clue in the OEIS~\cite{oeis}.
The only thing one can be sure of is that the coefficients cannot be interpreted as a~counting sequence of combinatorial class: some of them are negative.
Thus, we would rather expect linear combinations of different counting sequences, as is the case for indecomposable permutations \eqref{eqn:asymp_ip}.

In addition, there are various questions about patterns in permutations and other structures.
Thus, one may wonder if it is possible to extend Theorems~\ref{thm:patt_asymp_1} and~\ref{thm:patt_asymp_2} to any very tight pattern.
We will address this problem and propose a solution in our next paper~\cite{KN-clusters}.
We will also show that this kind of permutation patterns can be treated similarly to the special family of matching patterns that we call \emph{endhered}.
Studying endhered patterns in matchings were originally motivated by their relations to distributions of special kind of patterns in RNA secondary structures with allowed pseudoknots, modelled as fixed-point free involutions~\cite{BianeHampikianKirgizovNurligareev2023}.
However, it turns out that endhered patterns are also of independent interest, since their asymptotic behavior is somewhat similar to that of very tight permutation patterns.

A much more ambitions goal would be to study patterns and their asymptotics in graphs.
This problem is very hard in its generality.
A possible approach would be to start with so-called \emph{ordered graphs}~\cite{DucoffeFeuilloleyHabibPitois2023}.
On the one hand, they can be considered as a generalization of matchings, which potentially allows to employ ideas that are useful for treating matchings.
On the other hand, we anticipate the need for advances techniques for both enumeration and asymptotic analysis.
This may include, but is not limited to, tools such as graphic generating functions for enumeration~\cite{DovgalPanafieu2019} and coefficient generating functions for asymptotics~\cite{DovgalNurligareev2023}.

\section{Acknowledgements}
Authors were partially supported by the grant ANR-22-CE48-0002 funded by l'Agence Nationale de la Recherche and the project ANER ARTICO funded by Bourgogne-Franche-Comté region (France).
We also would like to thank the anonymous reviewer for their thorough reading and valuable comments.

\bibliographystyle{abbrv}
\bibliography{SO-perm}

\begin{thebibliography}{10}

\bibitem{AlbertAtkinson2005}
M.~H. Albert and M.~D. Atkinson.
\newblock Simple permutations and pattern restricted permutations.
\newblock {\em Discrete Math.}, 300(1-3):1--15, 2005.

\bibitem{AlbertAtkinsonKlazar2003}
M.~H. Albert, M.~D. Atkinson, and M.~Klazar.
\newblock The enumeration of simple permutations.
\newblock {\em J. Integer Seq.}, 6(4):art. 03.4.4, 18, 2003.

\bibitem{AtkinsonStitt2002}
M.~D. Atkinson and T.~Stitt.
\newblock Restricted permutations and the wreath product.
\newblock {\em Discrete Math.}, 259(1-3):19--36, 2002.

\bibitem{BianeHampikianKirgizovNurligareev2023}
C.~Biane, G.~Hampikian, S.~Kirgizov, and K.~Nurligareev.
\newblock Endhered patterns in matchings and {RNA}.
\newblock {Accepted to J. Comput. Biol.}, 2024.

\bibitem{Bona2007}
M.~B{\'o}na.
\newblock On three different notions of monotone subsequences.
\newblock In {\em {Permutation patterns. Proceedings of the 5th conference held
  at the University of St. Andrews, Scotland, June 11--15, 2007}}, pages
  89--114. Cambridge: Cambridge University Press, 2010.

\bibitem{Bona2011}
M.~B{\'o}na.
\newblock Non-overlapping permutation patterns.
\newblock {\em Pure Math. Appl. (PU.M.A.)}, 22(2):99--105, 2011.

\bibitem{Borinsky2018}
M.~Borinsky.
\newblock {Generating asymptotics for factorially divergent sequences}.
\newblock {\em {Electron. J. Comb.}}, 25(4):p4.1, 32, 2018.

\bibitem{Claesson2022}
A.~Claesson.
\newblock From {Hertzsprung}'s problem to pattern-rewriting systems.
\newblock {\em Algebr. Comb.}, 5(6):1257--1277, 2022.

\bibitem{Comtet1972}
L.~Comtet.
\newblock {Sur les coefficients de l'inverse de la s\'erie formelle \(\sum n!
  t^n\).}
\newblock {\em {C. R. Acad. Sci., Paris, S\'er. A}}, 275:569--572, 1972.

\bibitem{Cori2009}
R.~Cori.
\newblock {Indecomposable permutations, hypermaps and labeled Dyck paths.}
\newblock {\em {J. Comb. Theory, Ser. A}}, 116(8):1326--1343, 2009.

\bibitem{DovgalPanafieu2019}
S.~Dovgal and {\'E}.~de~Panafieu.
\newblock Symbolic method and directed graph enumeration.
\newblock {\em Acta Math. Univ. Comenian. (N.S.)}, 88(3):989--996, 2019.

\bibitem{DovgalNurligareev2023}
S.~Dovgal and K.~Nurligareev.
\newblock Asymptotics for graphically divergent series: dense digraphs and
  2-sat formulae, 2023.
\newblock arXiv preprint \url{https://arxiv.org/abs/2310.05282}.

\bibitem{DuaneRemmel2011}
A.~Duane and J.~Remmel.
\newblock Minimal overlapping patterns in colored permutations.
\newblock {\em Electron. J. Comb.}, 18(2):research paper p25, 38, 2011.

\bibitem{DucoffeFeuilloleyHabibPitois2023}
G.~Ducoffe, L.~Feuilloley, M.~Habib, and F.~Pitois.
\newblock Pattern detection in ordered graphs, 2023.
\newblock arXiv preprint \url{https://arxiv.org/abs/2302.11619}.

\bibitem{Elizalde2013}
S.~Elizalde.
\newblock The most and the least avoided consecutive patterns.
\newblock {\em Proc. Lond. Math. Soc.}, 106(5):957--979, 2013.

\bibitem{Elizalde2016}
S.~Elizalde.
\newblock A survey of consecutive patterns in permutations.
\newblock In {\em Recent trends in combinatorics}, pages 601--618. Cham:
  Springer, 2016.

\bibitem{ElizaldeNoy2003}
S.~Elizalde and M.~Noy.
\newblock Consecutive patterns in permutations.
\newblock {\em Adv. Appl. Math.}, 30(1-2):110--125, 2003.

\bibitem{GouldenJackson1979}
I.~P. Goulden and D.~M. Jackson.
\newblock An inversion theorem for cluster decompositions of sequences with
  distinguished subsequences.
\newblock {\em J. Lond. Math. Soc., II. Ser.}, pages 567--576, 1979.

\bibitem{GouldenJackson2004}
I.~P. Goulden and D.~M. Jackson.
\newblock {\em Combinatorial enumeration}.
\newblock Courier Corporation, 2004.

\bibitem{KN-clusters}
S.~Kirgizov and K.~Nurligareev.
\newblock Clusters of endhered patterns in permutations and matchings.
\newblock {In preparation}.

\bibitem{MarcusTardos2004}
A.~Marcus and G.~Tardos.
\newblock Excluded permutation matrices and the {Stanley–Wilf} conjecture.
\newblock {\em {J. Comb. Theory, Ser. A}}, 107(1):153--160, 2004.

\bibitem{MonteilNurligareev2024}
T.~Monteil and K.~Nurligareev.
\newblock Asymptotic probability for connectedness, 2024.
\newblock arXiv preprint \url{https://arxiv.org/abs/2401.00818}.

\bibitem{Myers2002}
A.~N. Myers.
\newblock Counting permutations by their rigid patterns.
\newblock {\em J. Comb. Theory, Ser. A}, 99(2):345--357, 2002.

\bibitem{oeis}
{N. J. A. Sloane and The OEIS Foundation Inc.}
\newblock {The On-Line Encyclopedia of Integer Sequences}, 2024.
\newblock Published electronically at \url{https://oeis.org}.

\bibitem{NoonanZeilberger1999}
J.~Noonan and D.~Zeilberger.
\newblock The {Goulden--Jackson} cluster method: extensions, applications and
  implementations.
\newblock {\em J. Difference Equ. Appl.}, 5(4-5):355--377, 1999.

\bibitem{Nurligareev2022}
K.~Nurligareev.
\newblock {\em Irreducibility of combinatorial objects: asymptotic probability
  and interpretation}.
\newblock PhD thesis, University Paris 13, 2022.
\newblock Thèse de doctorat dirigée par Thierry Monteil et Lionel Pournin,
  Informatique, Université Paris 13.

\bibitem{PanRemmel2016}
R.~Pan and J.~B. Remmel.
\newblock Asymptotics for minimal overlapping patterns for generalized {Euler}
  permutations, standard tableaux of rectangular shape, and column strict
  arrays.
\newblock {\em Discrete Math. Theor. Comput. Sci.}, 18(2):13, 2016.
\newblock Id/No 6.

\bibitem{SimionSchmidt1985}
R.~Simion and F.~W. Schmidt.
\newblock Restricted permutations.
\newblock {\em Eur. J. Comb.}, 6(4):383--406, 1985.

\end{thebibliography}

\end{document}